\theoremstyle{plain}
\newtheorem{thm}{Theorem}
\newtheorem{prop}[thm]{Proposition}
\newtheorem{lemma}[thm]{Lemma}
\newtheorem{cor}[thm]{Corollary}
\theoremstyle{definition}
\newtheorem{rem}[thm]{Remark}
\newtheorem{defn}[thm]{Definition}
\newtheorem{remark}[thm]{Remark}
\newcommand*{\Zset}{\mathbb{Z}}
\newcommand*{\Rset}{\mathbb{R}}
\newcommand*{\abs}[1]{\left\lvert#1\right\rvert}
\newcommand*{\norm}[1]{\left\lVert#1\right\rVert}
\DeclareMathOperator{\im}{im}
\DeclareMathOperator{\coker}{coker}
\DeclareMathOperator{\id}{id}
\DeclareMathOperator{\Homeo}{Homeo}
\newcommand{\To}{\Rightarrow}
\newcommand*{\Vect}{\mathbf{Vect}}
\newcommand*{\Set}{\mathbf{Set}}
\newcommand*{\Ab}{\mathbf{Ab}}
\newcommand*{\Top}{\mathbf{Top}}
\newcommand*{\mb}[1]{\mathbf{#1}}
\newcommand{\Trans}[1]{\mb{Trans}_{\mb{#1}}}
\newcommand{\Transp}[1]{\mb{Trans}_{#1}}
\newcommand*{\K}{\textnormal{K}}
\newcommand*{\dist}{\textnormal{d}}
\newcommand*{\Dgm}{\mathsf{Dgm}}
\let\epsilon\varepsilon
\begin{document}

\vspace*{1cm}
{\begin{center}{\Large\bfseries Erosion distance for generalized persistence modules}\\
\vspace{-3mm} {\begin{center}\large Ville Puuska \end{center}} \makeatletter{\renewcommand*{\@makefnmark}{}
\footnotetext{University of Tampere, Faculty of Natural Sciences \today}\makeatother} \makeatletter{\renewcommand*{\@makefnmark}{}
\footnotetext{puuskaville@gmail.com}\makeatother}\end{center}}

\begin{abstract}
The persistence diagram of Cohen-Steiner, Edelsbrunner, and Harer was recently generalized by Patel to the case of constructible persistence modules with values in a symmetric monoidal category with images. Patel also introduced a distance for persistence diagrams, the erosion distance. Motivated by this work, we extend the erosion distance to a distance of rank invariants of generalized persistence modules by using the generalization of the interleaving distance of Bubenik, de Silva, and Scott as a guideline. This extension of the erosion distance also gives, as a special case, a distance for multidimensional persistent homology groups with torsion introduced by Frosini. We show that the erosion distance is stable with respect to the interleaving distance, and that it gives a lower bound for the natural pseudo-distance in the case of sublevel set persistent homology of continuous functions.
\end{abstract}

\tableofcontents

\section*{Introduction}\addcontentsline{toc}{section}{Introduction}

Persistent homology has risen to be a popular and powerful tool for extracting topological features of data sets (see \cite{ghrist08} and \cite{carlsson09}). Persistent homology takes a filtration of a topological space and computes the birth and death times of topological features in the filtration. This allows us to distinguish the features that are only noise and have very short lifespans from the more persistent ones. To compute these features and their lifespans, homology is applied to the filtration, which leads to a functor $\Rset \to \Vect$,\footnote{$\Vect$ is the category of all \emph{finite dimensional} $\mathbb F$-vector spaces over some fixed field $\mathbb F$.} which is often called a persistence module. There are two main visualisations of persistence modules: barcodes, which collect the birth and death times of homology classes in the filtration as intervals (see \cite{zomorodian05b}); and persistence diagrams, which collect the same information as points in $\Rset^2$ (see \cite{cohen-steiner07} and \cite{chazal16}).

Since persistent homology is motivated by problems in data-analysis, we need to have a notion of distance between invariants obtained from different data sets, which must be stable with respect to noise in data. For barcodes and persistence diagrams, the bottleneck distance and the Wasserstein distances are the most commonly used distances. For persistence modules themselves, we have the interleaving distance, which has been generalized to extensions of persistence modules, e.g.~to multidimensional and generalized persistence modules (see \cite{lesnick15} and \cite{bubenik15}). For persistence modules $\Rset \to \Vect$, the interleaving distance is computable, because it is equal to the bottleneck distance, but up to our knowledge, there are currently no efficient algorithms to compute the interleaving distance in the multidimensional setting.

In this paper, we present a stable distance for persistence modules $\mb P \to \mb C$, i.e. functors, which is computed directly from invariants of persistence modules known as rank invariants, where $\mb P$ is a preordered set and $\mb C$ is an Abelian category. This distance is an extension of two previous distances: the erosion distance of \cite{patel16}, and the distance $\dist_T$ of \cite{frosini13}. We call this distance the erosion distance after the former. We show that the erosion distance is stable with respect to the interleaving distance, and that it gives a lower bound for the natural pseudo-distance in the case of sublevel set persistent homology of continuous functions.

The distance $\dist_T$ was introduced by Frosini in \cite{frosini13} as a distance for multidimensional persistent homology groups with torsion, i.e.~persistence modules obtained by applying singular homology with coefficients in an Abelian group to a multiparameter filtration of a space. It was shown that $\dist_T$ gives a lower bound for the natural pseudo-distance when the filtrations are obtained as sublevel set filtrations of continuous functions. This distance can be directly extended for all functors $\Rset^n \to \Ab$.

A recent step forward in the effort to extend the theory of persistent homology came when Patel \cite{patel16} generalized the persistence diagram for so called constructible persistence modules $\Rset \to \mb C$, where $\mb C$ is any essentially small symmetric monoidal category with images. Additionally, a new distance for persistence diagrams, the erosion distance, was introduced.

This paper has two main purposes. Firstly, we wish to extend the erosion distance of \cite{patel16}, independent of persistence diagrams, in order to allow it to be used in the multidimensional setting without requiring constructibility. Secondly, we wish to look at the distance $\dist_T$ of \cite{frosini13} from a more categorical perspective. Essentially, defining either of these distances starts with giving a preorder of the target category $\mb C$, and then extending it to a preorder of maps $\Dgm_{\Rset^n} := \{(a,b) \in \Rset^n \times \Rset^n \mid a < b\} \to \mb C$. Then, every persistence module $F \colon \Rset^n \to \mb C$ induces a map
\[\mathcal{F} \colon \Dgm_{\Rset^n} \to \mb C,\ \mathcal{F}(a,b) = \im F(a < b),\]
which is a straightforward generalization of the rank invariant of \cite{carlsson09b}. For maps $f,g \colon \Dgm_{\Rset^n} \to \mb C$, we get an extended pseudo-metric by taking the infimum of all $\epsilon \in [0,\infty)$ such that
\[f(a-\epsilon,b+\epsilon) \leq g(a,b) \text{ and } g(a-\epsilon,b+\epsilon) \leq f(a,b) \text{ for all } (a,b) \in \Dgm_{\Rset^n},\]
which gives us an extended pseudo-metric for rank invariants of persistence modules $\Rset^n \to \mb C$. To extend this to persistence modules $\mb P \to \mb C$, we use translations of the preordered set $\mb P$ and superlinear families or sublinear projections in fundamentally the same way that they are used in \cite{bubenik15}, where they are used to extend the interleaving distance for generalized persistence modules.

\subsection*{Outline}

In section \ref{sec: Erosion distance for maps}, we define the erosion distance in its most general form, i.e.~for (decreasing) maps
\[\Dgm_{\mb P} = \{(a,b) \in \mb P \times \mb P \mid a < b\} \to \mb G,\]
where $\mb G$ is a preordered class, and $\mb P$ is a preordered set equipped with a sublinear projection or a superlinear family. We also show in subsection \ref{sec: The distance induced by the sup-norm} that the $L^{\infty}$-distance of functions $X \to \Rset$, where $X$ is any set can be interpreted as an erosion distance.

In section \ref{sec: Erosion distance for persistence modules}, we first go over the details of the erosion distance of \cite{patel16}, and then define the erosion distance for rank invariants of persistence modules. We prove that it is an extended pseudo-metric (Corollary \ref{cor: metric for persistence modules}), and that it is stable with respect to the interleaving distance (Theorem \ref{thm: stability of erosion wrt interleaving}).

In section \ref{sec: Minimal preorders and the natural pseudo-distance}, we show that the distance $\dist_T$ of \cite{frosini13} is a special case of the erosion distance, and show that the erosion distance gives a lower bound for the natural pseudo-distance (Theorem \ref{thm: lower bound nat pseudodist}).

In section \ref{sec: Adjunction relation}, we consider the situation where $\mb P$ is equipped with a sublinear projection \emph{and} a superlinear family. We show that if the sublinear projection and the superlinear family satisfy the adjunction relation as defined in \cite{bubenik15}, then the two erosion distances are equal.

\section{Erosion distance for maps}\label{sec: Erosion distance for maps}

\noindent Throughout these notes we let $\mb P$ be a preordered set and $\mb G$ be a preordered class. We denote
\[\Dgm_{\mb P} = \{(a,b) \in \mb P \times \mb P \mid a < b\}\footnote{We use the notation $a < b$ to mean that $a \leq b$ and $a \neq b$.}\]
and we define a preorder for the set $\Dgm_{\mb P}$ by setting
\[(a,b) \leq (a',b') \iff a \geq a' \text{ and } b \leq b',\]
i.e.~the preorder inherited from $\mb P^{\text{op}} \times \mb P$.

Let $\mb P = \Rset$ and take a function $f \colon \Dgm_\Rset \to \mb G$. We can think of the function $f$ as an assignment of elements of $\mb G$ to each point in $\Dgm_\Rset$. Now, let $\epsilon \geq 0$ and consider the function
\[f_\epsilon \colon \Dgm_\Rset \to \mb G,\ f_\epsilon(a,b) = f(a-\epsilon, b+\epsilon).\]
We can think of the assignment of elements given by $f_\epsilon$ as moving the points of $f$ down and right by $\epsilon$, or towards the diagonal $\{(x,y) \mid x = y\}$ by $\sqrt 2 \epsilon$, and killing elements that are moved to or below the diagonal. If $g \colon \Dgm_\Rset \to \mb G$ is another function, we can ask how much we need to move $f$ and $g$ towards the diagonal to get the pair of inequalities
\[f_\epsilon \leq g \text{ and } g_\epsilon \leq f.\]
It's easy to see that by taking the infimum over all $\epsilon$ such that these inequalities hold we get an extended pseudo-metric for functions $\Dgm_\Rset \to \mb G$. This idea can be generalized to arbitrary preordered sets $\mb P$ by using translations and superlinear families or sublinear projections in the same way as in \cite{bubenik15}. Specifically, instead of moving points down and right by $\epsilon$, we move them by a pair of translations of $\mb P$.

\begin{defn}
A \emph{translation}\footnote{Note that our definition of a translation is stricter than the one in \cite{bubenik15} since we require a translation to be an automorphism, instead of just an endofunctor of $\mb P$.} of the set $\mb P$ is a map $\Gamma \colon \mb P \to \mb P$ such that
\begin{enumerate}[--]
\item $\Gamma$ is a bijection,
\item $a \leq b \Rightarrow \Gamma a \leq \Gamma b$ and $\Gamma^{-1} a \leq \Gamma^{-1} b$ for all $a,b \in \mb P$,
\item $a \leq \Gamma a$ for all $a \in \mb P$.
\end{enumerate}
In other words, a translation is an automorphism of $\mb P$ with a natural transformation from the identity functor $I \colon \mb P \to \mb P$. We denote the preordered set of translations of $\mb P$ by $\Trans P$.
\end{defn}

Note that $\Trans P$ is closed under composition and for every $\Gamma \in \Trans P$ we have $\Gamma^{-1}a \leq a$ for all $a \in \mb P$.

\begin{defn}
Let $\Gamma, \K \in \Trans P$. A $(\Gamma,\K)$-\emph{erosion} of a map $f \colon \Dgm_{\mb P} \to \mb G$ is the map
\[\nabla_{\Gamma, \K}f \colon \Dgm_{\mb P} \to \mb G,\ (a,b) \mapsto f(\Gamma^{-1} a, \K b).\]
We also use the shorthand $\nabla_{\Gamma}f = \nabla_{\Gamma,\Gamma}f$.
\end{defn}

\begin{prop}[Triangle inequality]\label{thm: triangle inequality}
Let $f,g,h \colon \Dgm_{\mb P} \to \mb G$ and $\Gamma, \Gamma', \K, \K' \in \Trans P$ such that
\[\nabla_{\Gamma,\K}f \leq g,\ \nabla_{\K,\Gamma}g \leq f \text{ and } \nabla_{\Gamma',\K'}g \leq h,\ \nabla_{\K',\Gamma'}h \leq g.\]
Then
\[\nabla_{\Gamma'\Gamma,\K\K'}f \leq h,\ \nabla_{\K\K',\Gamma'\Gamma}h \leq f.\]
\end{prop}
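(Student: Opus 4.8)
The strategy is to chain the two given pairs of inequalities together, keeping careful track of how the erosion operators compose. First I would record the basic algebraic fact that erosions compose: for $\Gamma, \Gamma', \K, \K' \in \Trans P$ and any $f \colon \Dgm_{\mb P} \to \mb G$, we have $\nabla_{\Gamma'}\nabla_{\Gamma} f = \nabla_{\Gamma'\Gamma} f$ more generally $\nabla_{\Gamma',\K'}\nabla_{\Gamma,\K} f (a,b) = f((\Gamma'\Gamma)^{-1} a, \K\K' b)$, using that $(\Gamma'\Gamma)^{-1} = \Gamma^{-1}\Gamma'^{-1}$ and that $\K, \K'$ commute as far as the composite value $\K\K' b = \K'\K b$ is concerned (both are just function composition). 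This is the bookkeeping identity $\nabla_{\Gamma'\Gamma,\K\K'} f = \nabla_{\Gamma',\K'}\bigl(\nabla_{\Gamma,\K}f\bigr)$, which I would state and verify in one line by unwinding the definitions.

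Next I would observe that each $\nabla_{\Gamma,\K}$ is monotone with respect to the preorder on maps $\Dgm_{\mb P} \to \mb G$ (pointwise preorder induced from $\mb G$): if $f \leq f'$ then $\nabla_{\Gamma,\K} f \leq \nabla_{\Gamma,\K} f'$, since it just precomposes with a fixed map on the domain. With these two facts in hand the argument is a short chase. From the hypothesis $\nabla_{\Gamma,\K} f \leq g$, apply $\nabla_{\Gamma',\K'}$ (monotone) to get $\nabla_{\Gamma'\Gamma,\K\K'} f = \nabla_{\Gamma',\K'}\nabla_{\Gamma,\K} f \leq \nabla_{\Gamma',\K'} g \leq h$, the last step being the third hypothesis. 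Symmetrically, starting from $\nabla_{\K',\Gamma'} h \leq g$, apply $\nabla_{\K,\Gamma}$ to obtain $\nabla_{\K\K',\Gamma'\Gamma} h = \nabla_{\K,\Gamma}\nabla_{\K',\Gamma'} h \leq \nabla_{\K,\Gamma} g \leq f$, using $\nabla_{\K,\Gamma} g \leq f$ at the end. This yields exactly the two claimed inequalities $\nabla_{\Gamma'\Gamma,\K\K'} f \leq h$ and $\nabla_{\K\K',\Gamma'\Gamma} h \leq f$.

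The only point requiring a little care — and the step I expect to be the main (minor) obstacle — is getting the order of composition of the translations right in the two halves. In the first inequality the "outer" erosion is $\nabla_{\Gamma',\K'}$ and the "inner" one is $\nabla_{\Gamma,\K}$, so the first coordinate is eroded by $\Gamma^{-1}$ then by $\Gamma'^{-1}$, giving $(\Gamma'\Gamma)^{-1}$; in the second inequality the roles of the $\Gamma$'s and $\K$'s swap, and one must confirm the composite is $\K\K'$ on the first coordinate and $\Gamma'\Gamma$ on the second, matching the statement. Since $\Trans P$ consists of bijections that need not commute, I would be explicit that what appears is honest function composition, not an abelian operation, and that the asserted composites are precisely what the two-step chase produces. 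Everything else is immediate from the definitions.
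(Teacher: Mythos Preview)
Your argument is correct and is essentially the paper's proof: the paper simply does the pointwise chase $f(\Gamma^{-1}\Gamma'^{-1}a,\K\K'b)\leq g(\Gamma'^{-1}a,\K'b)\leq h(a,b)$ (and symmetrically), which is exactly what your ``compose erosions, then use monotonicity'' packaging unwinds to. One remark: your aside that ``$\K,\K'$ commute as far as the composite value $\K\K'b=\K'\K b$ is concerned'' is false in general and, as you yourself note later, unnecessary---the composition identity $\nabla_{\Gamma',\K'}\nabla_{\Gamma,\K}f=\nabla_{\Gamma'\Gamma,\K\K'}f$ holds on the nose without any commutativity, so just drop that clause.
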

\begin{proof}
Take $(a,b) \in \Dgm_{\mb P}$. Now
\begin{align*}
f((\Gamma'\Gamma)^{-1} a, \K\K' b) &= f(\Gamma^{-1}\Gamma'^{-1}a, \K\K'b)
\\&\leq g(\Gamma'^{-1}a, \K'b)
\\&\leq h(a,b),
\end{align*}
so $\nabla_{\Gamma'\Gamma,\K\K'}f(a,b) \leq h(a,b)$, and
\begin{align*}
h((\K\K')^{-1}a, \Gamma'\Gamma b) &= h(\K'^{-1}\K^{-1}a, \Gamma'\Gamma b)
\\&\leq g(\K^{-1} a, \Gamma b)
\\&\leq f(a,b),
\end{align*}
so $\nabla_{\K\K',\Gamma'\Gamma}h(a,b) \leq f(a,b)$.
\end{proof}

\begin{defn}[\cite{bubenik15}]\label{def: sub- superlin}
A function $\Omega \colon [0,\infty) \to \Trans P$ is called a \emph{superlinear family} if for all $\epsilon, \epsilon' \in [0,\infty)$
\[\Omega_\epsilon\Omega_{\epsilon'} \leq \Omega_{\epsilon+\epsilon'}.\]
An increasing function $\omega \colon \Trans P \to [0,\infty]$ is called a \emph{sublinear projection} if $\omega_I = 0$, where $I$ is the identity translation on $\mb P$, and for all $\Gamma,\K \in \Trans P$
\[\omega_{\Gamma \K} \leq \omega_\Gamma + \omega_\K.\]
\end{defn}

Note that a superlinear projection is always increasing, since for $\epsilon \leq \epsilon'$
\[\Omega_\epsilon = I\Omega_\epsilon \leq \Omega_{\epsilon'-\epsilon}\Omega_\epsilon \leq \Omega_{\epsilon'}.\]
Hence, a superlinear family is a functor $[0,\infty) \to \Trans P$ and a sublinear projection is functor $\Trans P \to [0,\infty]$.

\begin{defn}[Erosion distance]
If we have a superlinear family $\Omega \colon [0,\infty) \to \Trans P$, we define the \emph{erosion distance w.r.t.} $\Omega$ for maps $f,g \colon \Dgm_{\mb P} \to \mb G$ to be
\[\dist_{E}^{\Omega}(f,g) = \inf \big(\{\epsilon \mid \nabla_{\Omega_\epsilon}f \leq g,\ \nabla_{\Omega_\epsilon}g \leq f\} \cup \{\infty\}\big).\]
If we have a sublinear projection $\omega \colon \Trans P \to [0,\infty]$, we define the \emph{erosion distance w.r.t.} $\omega$ for maps $f,g \colon \Dgm_{\mb P} \to \mb G$ to be
\begin{align*}
\dist_{E}^{\omega}(f,g) = \inf \big(\{\epsilon \mid \exists \Gamma,\K \text{ s.t.~} \omega_\Gamma,\omega_\K \leq \epsilon \text{ and } \nabla_{\Gamma, \K}f \leq g, \nabla_{\K, \Gamma}g \leq f\} \cup \{\infty\}\big).
\end{align*}
If the choice of $\Omega$ or $\omega$ is clear from context, we use a shorthand notation $\dist_E$ for the erosion distance.
\end{defn}

\begin{lemma}\label{lemma: -1 order}
For all $\Gamma, \K \in \Trans P$
\[\Gamma \leq \K \Rightarrow \K^{-1} \leq \Gamma^{-1}.\]
\end{lemma}
\begin{proof}
Let's assume that $\Gamma \leq \K$ and take $a \in \mb P$. Since $\Gamma$ has to be a bijection, we can take $b \in \mb P$ such that $a = \Gamma b$. Now
\begin{align*}
\Gamma b \leq \K b &\Rightarrow K^{-1}\Gamma b \leq b
\\&\Rightarrow \K^{-1} \Gamma b \leq \Gamma^{-1} \Gamma b
\\&\Rightarrow \K^{-1} a \leq \Gamma^{-1} a.
\end{align*}
\end{proof}

\begin{prop}\label{thm: metric}\ 
\begin{enumerate}[\itshape i)]
\item If $\Omega \colon [0,\infty) \to \Trans P$ is a superlinear family, then $\dist_E^{\Omega}$ is an extended pseudo-metric on the set of \emph{decreasing} functions $\Dgm_{\mb P} \to \mb G$.
\item If $\Omega \colon [0,\infty) \to \Trans P$ is linear, i.e.~$\Omega_0 = I$ and $\Omega_a\Omega_b = \Omega_{a+b}$ for all $a,b \in [0,\infty)$, then $\dist_E^{\Omega}$ is an extended pseudo-metric on the set of all functions $\Dgm_{\mb P} \to \mb G$.
\item If $\omega \colon \Trans P \to [0,\infty]$ is a sublinear projection, then $\dist_E^{\omega}$ is an extended pseudo-metric on the set of all functions $\Dgm_{\mb P} \to \mb G$.
\end{enumerate}
\end{prop}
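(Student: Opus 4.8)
The plan is to verify the three defining properties of an extended pseudo-metric—namely $\dist_E(f,f) = 0$, symmetry, and the triangle inequality—for each of the three cases, reducing cases $ii)$ and $iii)$ to the machinery already set up for case $i)$ where possible. Symmetry is immediate in all three cases: the defining set of $\epsilon$'s is visibly symmetric in $f$ and $g$ (for the sublinear projection version, swapping $f$ and $g$ and relabelling $\Gamma \leftrightarrow \K$ gives the same condition). So the real content is reflexivity and the triangle inequality.

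For reflexivity, I would argue as follows. In case $i)$, we have $\Omega_0 \leq \Omega_0 \Omega_0 \leq \Omega_0$, but more to the point we need $\nabla_{\Omega_0} f \leq f$; since $\Omega_0 \geq I$ only gives $\Omega_0^{-1} a \leq a$ and $\Omega_0 b \geq b$, we get $(a,b) \leq (\Omega_0^{-1}a, \Omega_0 b)$ in $\Dgm_{\mb P}$, so for a \emph{decreasing} $f$ we obtain $\nabla_{\Omega_0} f(a,b) = f(\Omega_0^{-1} a, \Omega_0 b) \leq f(a,b)$, whence $\dist_E^\Omega(f,f) = 0$. This is exactly where the decreasing hypothesis is used, and it explains why case $i)$ is restricted to decreasing functions. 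In case $ii)$, linearity forces $\Omega_0 = I$, so $\nabla_{\Omega_0} f = f$ trivially, no monotonicity of $f$ needed; in case $iii)$, $\omega_I = 0$ and $\nabla_{I,I} f = f$, so again reflexivity is free for arbitrary $f$.

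For the triangle inequality, Proposition \ref{thm: triangle inequality} does the heavy lifting. In case $i)$: given $\epsilon > \dist_E^\Omega(f,g)$ and $\epsilon' > \dist_E^\Omega(g,h)$, pick witnesses so that $\nabla_{\Omega_\epsilon} f \leq g$, $\nabla_{\Omega_\epsilon} g \leq f$, $\nabla_{\Omega_{\epsilon'}} g \leq h$, $\nabla_{\Omega_{\epsilon'}} h \leq g$; applying Proposition \ref{thm: triangle inequality} with $\Gamma = \K = \Omega_\epsilon$ and $\Gamma' = \K' = \Omega_{\epsilon'}$ yields $\nabla_{\Omega_{\epsilon'}\Omega_\epsilon, \Omega_\epsilon\Omega_{\epsilon'}} f \leq h$ and the symmetric inequality. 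Now I use the superlinear inequality $\Omega_{\epsilon'}\Omega_\epsilon \leq \Omega_{\epsilon + \epsilon'}$ together with Lemma \ref{lemma: -1 order} (so that $\Omega_{\epsilon+\epsilon'}^{-1} \leq (\Omega_{\epsilon'}\Omega_\epsilon)^{-1}$) and the monotonicity of $\mb P$-translations to conclude that $f(\Omega_{\epsilon+\epsilon'}^{-1} a, \Omega_{\epsilon+\epsilon'} b) \leq f((\Omega_{\epsilon'}\Omega_\epsilon)^{-1} a, \Omega_\epsilon\Omega_{\epsilon'} b) \leq h(a,b)$—but this last step again requires $f$ decreasing, which we have—hence $\nabla_{\Omega_{\epsilon+\epsilon'}} f \leq h$ and likewise $\nabla_{\Omega_{\epsilon+\epsilon'}} h \leq f$, so $\dist_E^\Omega(f,h) \leq \epsilon + \epsilon'$; taking infima over $\epsilon, \epsilon'$ finishes it. Case $ii)$ is the same but simpler: linearity gives $\Omega_{\epsilon'}\Omega_\epsilon = \Omega_{\epsilon+\epsilon'}$ exactly, so no inequality-chasing through $f$ is needed and the decreasing hypothesis can be dropped. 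Case $iii)$: given witnesses $\Gamma,\K$ with $\omega_\Gamma, \omega_\K \leq \epsilon$ and $\Gamma', \K'$ with $\omega_{\Gamma'}, \omega_{\K'} \leq \epsilon'$ for the two pairs of inequalities, Proposition \ref{thm: triangle inequality} gives $\nabla_{\Gamma'\Gamma, \K\K'} f \leq h$ and $\nabla_{\K\K', \Gamma'\Gamma} h \leq f$, and subadditivity of $\omega$ gives $\omega_{\Gamma'\Gamma} \leq \omega_{\Gamma'} + \omega_\Gamma \leq \epsilon + \epsilon'$ and $\omega_{\K\K'} \leq \epsilon + \epsilon'$, so $\Gamma'\Gamma, \K\K'$ witness $\dist_E^\omega(f,h) \leq \epsilon + \epsilon'$; take infima.

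The main obstacle—though it is more a bookkeeping subtlety than a genuine difficulty—is keeping straight where the \emph{decreasing} hypothesis enters in case $i)$ and confirming it is genuinely unnecessary in cases $ii)$ and $iii)$: it is needed precisely at the two points where one passes from an inequality $\Omega_{\epsilon+\epsilon'} \geq \Omega_{\epsilon'}\Omega_\epsilon$ (resp.\ $\Omega_0 \geq I$) of translations to the corresponding inequality of values of $f$, via the induced order on $\Dgm_{\mb P}$ and Lemma \ref{lemma: -1 order}. One must also check the harmless edge behaviour of the infimum (the set of valid $\epsilon$ is upward closed in each case, by the same translation-monotonicity argument, so the infimum is the right notion and $\dist_E$ genuinely takes values in $[0,\infty]$).
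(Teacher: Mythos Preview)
Your proposal is correct and follows essentially the same route as the paper's proof: symmetry and non-negativity are dismissed as trivial, reflexivity in case \emph{i)} comes from $\nabla_{\Omega_0}f \leq f$ for decreasing $f$ (and is immediate in cases \emph{ii)} and \emph{iii)}), and the triangle inequality is obtained in each case by invoking Proposition~\ref{thm: triangle inequality} and then, in case \emph{i)}, passing from $\Omega_{\epsilon'}\Omega_\epsilon$ to $\Omega_{\epsilon+\epsilon'}$ via superlinearity, Lemma~\ref{lemma: -1 order}, and the decreasing hypothesis. Your additional remark on upward closedness of the witness set is not in the paper (the paper works directly with elements of the witness sets rather than with arbitrary $\epsilon$ above the infimum, so it does not need it), but it is correct and harmless.
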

\begin{proof}
It's trivial that $\dist_E^{\Omega}$ and $\dist_E^{\omega}$ are symmetric and non-negative in all cases. Additionally, in cases \emph{ii)} and \emph{iii)} it's clear that $\dist_E(f,f) = 0$ for all maps $f \colon \Dgm_{\mb P} \to \mb G$. If we take a decreasing map $f \colon \Dgm_{\mb P} \to \mb G$, we see that $\nabla_{\Gamma,\K}f \leq f$ for all $\Gamma, \K \in \Trans P$, so in particular $\nabla_{\Omega_0}f \leq f$. This implies that $\dist_E^{\Omega}(f,f) = 0$. All that remains to prove is the triangle inequality.

\begin{enumerate}[\itshape i)]
\item Let $f,g,h \colon \Dgm_{\mb P} \to \mb G$ and $\epsilon, \epsilon' \geq 0$ such that
\[\nabla_{\Omega_\epsilon}f \leq g,\ \nabla_{\Omega_\epsilon}g \leq f,\ \nabla_{\Omega_{\epsilon'}}g \leq h,\ \nabla_{\Omega_{\epsilon'}}h \leq g.\]
By the triangle inequality (Proposition \ref{thm: triangle inequality}) $\nabla_{\Omega_{\epsilon'}\Omega_\epsilon}f \leq h$ and $\nabla_{\Omega_\epsilon\Omega_{\epsilon'}}h \leq f$.

Let's assume that $f$ and $h$ are decreasing and take $(a,b) \in \Dgm_{\mb P}$. We notice that by superlinearity and Lemma \ref{lemma: -1 order} $((\Omega_{\epsilon'}\Omega_{\epsilon})^{-1}a , \Omega_{\epsilon'}\Omega_{\epsilon} b) \leq (\Omega_{\epsilon+\epsilon'}^{-1}a, \Omega_{\epsilon+\epsilon'}b)$. Since $f$ is decreasing
\[\nabla_{\Omega_{\epsilon+\epsilon'}}f(a,b) = f(\Omega_{\epsilon+\epsilon'}^{-1}a, \Omega_{\epsilon+\epsilon'}b) \leq f((\Omega_{\epsilon'}\Omega_{\epsilon})^{-1}a , \Omega_{\epsilon'}\Omega_{\epsilon} b) = \nabla_{\Omega_{\epsilon'}\Omega_\epsilon}f(a,b).\]
Hence $\nabla_{\Omega_{\epsilon+\epsilon'}}f \leq h$. Similarly, we see that $\nabla_{\Omega_{\epsilon+\epsilon'}}h \leq f$.

\item Let $f,g,h$ and $\epsilon, \epsilon'$ be as in the previous case. By the same argument, $\nabla_{\Omega_{\epsilon'}\Omega_\epsilon}f \leq h$ and $\nabla_{\Omega_\epsilon\Omega_{\epsilon'}}h \leq f$. If $\Omega$ is linear, these inequalities give us $\nabla_{\Omega_{\epsilon+\epsilon'}}f \leq h$ and $\nabla_{\Omega_{\epsilon+\epsilon'}}h \leq f$.

\item Let $f,g,h \colon \Dgm_{\mb P} \to \mb G$, $\epsilon, \epsilon' \geq 0$ and $\Gamma, \Gamma', \K, \K' \in \Trans P$ such that $\omega_\Gamma,\omega_\K \leq \epsilon$, $\omega_{\Gamma'},\omega_{\K'} \leq \epsilon'$ and
\[\nabla_{\Gamma, \K}f \leq g,\ \nabla_{\K, \Gamma}g \leq f,\ \nabla_{\Gamma', \K'}g \leq h,\ \nabla_{\K', \Gamma'}h \leq g.\]
By the triangle inequality (Proposition \ref{thm: triangle inequality})
\[\nabla_{\Gamma'\Gamma, \K\K'}f \leq h,\ \nabla_{\K\K', \Gamma'\Gamma}h \leq f,\]
and by sublinearity $\omega_{\Gamma'\Gamma},\omega_{\K\K'} \leq \epsilon + \epsilon'$.
\end{enumerate}
\end{proof}

\subsection{The \texorpdfstring{$L^\infty$}{sup}-distance as an erosion distance}\label{sec: The distance induced by the sup-norm}

As our first example, we consider the erosion distance of level set filtrations of functions $f \colon X \to \Rset$, where $X$ is a fixed set. We show that this is simply the $L^\infty$-distance $\dist_\infty(f,g) = \norm{f-g}_\infty$ of functions $f,g \colon X \to \Rset$.

\begin{defn}
Let $X$ be a set. To every function $f \colon X \to \Rset$ we attach a function
\[F \colon \Dgm_\Rset \to \Set,\ F(a,b) = f^{-1}([a,b]),\]
where $\Set$ is the category of sets. Let $\Omega \colon [0,\infty) \to \Transp{\Rset}$, $\Omega_\epsilon(a) = a + \epsilon$. We define a preorder for $\Set$ by taking the opposite of the natural preorder of sets, i.e.~we set
\[A \leq B \iff A \supseteq B.\]
Now, since these functions $\Dgm_\Rset \to \Set$ are clearly decreasing, we can define the erosion distance $\dist_E$ for functions $f,g \colon X \to \Rset$ by setting
\[\dist_E(f,g) = \dist_E^{\Omega}(F,G).\]
\end{defn}

\begin{thm}\label{thm: L_infty}
For all functions $f,g \colon X \to \Rset$
\[\dist_\infty(f,g) = \dist_E(f,g).\]
\end{thm}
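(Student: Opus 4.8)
The plan is to prove the two inequalities $\dist_\infty(f,g) \leq \dist_E(f,g)$ and $\dist_E(f,g) \leq \dist_\infty(f,g)$ separately, after unwinding what the erosion distance $\dist_E^\Omega(F,G)$ concretely means for these particular set-valued maps. With the linear family $\Omega_\epsilon(a) = a+\epsilon$, we have $\Omega_\epsilon^{-1}(a) = a - \epsilon$, so $\nabla_{\Omega_\epsilon}F(a,b) = F(a-\epsilon, b+\epsilon) = f^{-1}([a-\epsilon, b+\epsilon])$. Recalling that the preorder on $\Set$ is reverse inclusion, the condition $\nabla_{\Omega_\epsilon}F \leq G$ says precisely that $f^{-1}([a-\epsilon,b+\epsilon]) \supseteq g^{-1}([a,b])$ for all $a < b$, and symmetrically for the other inequality. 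So $\dist_E(f,g)$ is the infimum of all $\epsilon \geq 0$ such that $g^{-1}([a,b]) \subseteq f^{-1}([a-\epsilon,b+\epsilon])$ and $f^{-1}([a,b]) \subseteq g^{-1}([a-\epsilon,b+\epsilon])$ for every $a < b$.

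For $\dist_E(f,g) \leq \dist_\infty(f,g)$: set $\epsilon = \norm{f-g}_\infty$ (the case $\epsilon = \infty$ is trivial, so assume it is finite). If $x \in g^{-1}([a,b])$, then $a \leq g(x) \leq b$, and since $\abs{f(x) - g(x)} \leq \epsilon$ we get $a - \epsilon \leq f(x) \leq b + \epsilon$, i.e.~$x \in f^{-1}([a-\epsilon, b+\epsilon])$. This shows $g^{-1}([a,b]) \subseteq f^{-1}([a-\epsilon,b+\epsilon])$, and the roles of $f$ and $g$ are symmetric, so both required inclusions hold for this $\epsilon$; hence $\dist_E(f,g) \leq \epsilon = \dist_\infty(f,g)$.

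For the reverse inequality $\dist_\infty(f,g) \leq \dist_E(f,g)$: it suffices to show that whenever $\epsilon \geq 0$ satisfies both inclusions for all $a < b$, then $\norm{f-g}_\infty \leq \epsilon$; taking the infimum then gives the claim (and if no finite $\epsilon$ works the inequality is vacuous). Fix $x \in X$ and apply the inclusion $g^{-1}([a,b]) \subseteq f^{-1}([a-\epsilon,b+\epsilon])$ with a pair $(a,b)$ chosen so that $g(x) \in [a,b]$ and $[a,b]$ is as tight as possible — the natural choice is $a = g(x)$ and $b = g(x) + \delta$ for small $\delta > 0$, which lies in $\Dgm_\Rset$. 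Then $x \in g^{-1}([a,b])$, so $x \in f^{-1}([g(x) - \epsilon, g(x) + \delta + \epsilon])$, giving $g(x) - \epsilon \leq f(x) \leq g(x) + \delta + \epsilon$. Letting $\delta \to 0^+$ yields $\abs{f(x) - g(x)} \leq \epsilon$, and since $x$ was arbitrary, $\norm{f-g}_\infty \leq \epsilon$.

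The main obstacle is the mild bookkeeping around the strict inequality $a < b$ in the definition of $\Dgm_\Rset$: we cannot directly plug in the degenerate interval $[g(x), g(x)]$, which is why the argument above uses $b = g(x) + \delta$ and passes to the limit. One should double-check that this limiting step is clean and that no issue arises when $g^{-1}([a,b])$ is empty (in which case the inclusion is vacuously true and contributes no constraint, consistent with the infimum picking up the binding constraints from nonempty fibers). Everything else is a direct translation of definitions together with the elementary fact that $\abs{f(x)-g(x)} \leq \epsilon$ is equivalent to $f(x) \in [g(x)-\epsilon, g(x)+\epsilon]$.
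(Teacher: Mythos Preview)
Your proof is correct and follows essentially the same approach as the paper. Both arguments unwind the erosion condition to the inclusion $g^{-1}([a,b]) \subseteq f^{-1}([a-\epsilon,b+\epsilon])$ and handle the strict-inequality constraint $a<b$ by a limiting argument; the paper does this set-theoretically via $g^{-1}(\{r\}) = \bigcap_n g^{-1}([r-\tfrac{1}{n},r])$, while you do it pointwise with $[g(x),g(x)+\delta]$ and $\delta\to 0$, which is a cosmetic difference.
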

\begin{proof}
Let $f,g \colon X \to \Rset$ and $\epsilon \in [0,\infty)$. Now,
\begin{align*}
d_\infty(f,g) \leq \epsilon &\iff g(x) - \epsilon \leq f(x) \leq g(x) + \epsilon \text{ for all } x \in X
\\&\iff g^{-1}([r,r]) \subseteq f^{-1}([r-\epsilon,r+\epsilon]) \text{ for all } r \in \Rset
\\&\iff g^{-1}([a,b]) \subseteq f^{-1}([a-\epsilon,b+\epsilon]) \text{ for all } a \leq b \in \Rset
\\&\iff g^{-1}([a,b]) \subseteq f^{-1}([a-\epsilon,b+\epsilon]) \text{ for all } a < b \in \Rset
\\&\iff \Omega_\epsilon F \leq G.
\end{align*}
To see the second equivalence, set $r = g(x)$, and to see the $\Leftarrow$ direction of the second to last equivalence, note that
\begin{align*}
g^{-1}([r,r]) &= g^{-1}\big(\bigcap_{i = 1}^\infty [r-\frac{1}{n},r]\big)
\\&= \bigcap_{i = 1}^\infty g^{-1}([r-\frac{1}{n},r])
\\&\subseteq \bigcap_{i = 1}^\infty f^{-1}([r-\frac{1}{n}-\epsilon,r+\epsilon])
\\&= f^{-1}\big(\bigcap_{i = 1}^\infty [r-\frac{1}{n}-\epsilon,r+\epsilon]\big)
\\&= f^{-1}([r-\epsilon,r+\epsilon])
\end{align*}
for all $r \in \Rset$. By symmetry of the first inequality, we get $\dist_\infty(f,g) \leq \epsilon \iff \Omega_\epsilon G \leq F$. Hence,
\[\inf \{\epsilon \mid \dist_\infty(f,g) \leq \epsilon\} = \inf \{\epsilon \mid \Omega_\epsilon G \leq F \text{ and } \Omega_\epsilon F \leq G\},\]
i.e.~$\dist_\infty(f,g) = \dist_E(f,g)$.
\end{proof}

\section{Erosion distance for persistence modules}\label{sec: Erosion distance for persistence modules}

In this section, we specialize the erosion distance for rank invariants of persistence modules $\mb P \to \mb C$, where $\mb P$ is a preordered set and $\mb C$ is an Abelian category with a suitable preorder for its objects. First, in subsection \ref{sec: Preorder induced by the Grothendieck group} we go over the details of the erosion distance of \cite{patel16}, and then in subsection \ref{sec: subsection Erosion distance for persistence modules} we define the distance in full generality.

\subsection{Preorder induced by the Grothendieck group}\label{sec: Preorder induced by the Grothendieck group}

The main contribution of \cite{patel16} is a generalization of persistence diagrams to constructible persistence modules over $\Rset$ with values in a category $\mb C$, where $\mb C$ is an essentially small symmetric monoidal category with images. A persistence module $F \colon \Rset \to \mb C$ is said to be constructible, if there exists a finite set $S = \{s_1, \dots, s_n\} \subseteq \Rset$, where $s_1 < s_2 < \cdots < s_n$, such that
\begin{itemize}
\item for $p \leq q < s_1$ the morphism $F(p \leq q) = \id_e$, where $e \in \mb C$ is the neutral element of the monoidal category,
\item for $s_i \leq p \leq q < s_{i+1}$ the morphism $F(p \leq q)$ is an isomorphism, and
\item for $s_n \leq p \leq q$ the morphism $F(p \leq q)$ is an isomorphism.
\end{itemize}
We denote the set of isomorphism classes of $\mb C$ by $\mathcal J(\mb C)$, and we make $\mathcal J(\mb C)$ into a commutative monoid by setting
\[[A] + [B] = [A \otimes B],\]
for all $A,B \in \mb C$. Now, to every constructible persistence module $F \colon \Rset \to \mb C$ we attach a map
\[dF_{\mathcal A} \colon \Dgm_\Rset \to \mathcal A(\mb C),\ dF_{\mathcal A}(a,b) = [\im F(a < b - \delta)],\]
where $\delta > 0$ is small enough so that $\im F(a < b - \delta') \cong \im F(a < b - \delta)$ for all $0 < \delta' < \delta$, and $\mathcal A(\mb C)$ is the Grothendieck group of $\mb C$ obtained by taking the group completion of $\mathcal J(\mb C)$. If $\mb C$ happens to be Abelian, we consider $\mb C$ to be monoidal by taking the tensor product to be the coproduct $\otimes = \oplus$. Then, we attach a second map to $F$
\[dF_{\mathcal B} \colon \Dgm_\Rset \to \mathcal B(\mb C),\ dF_{\mathcal B}(a,b) = [\im F(a < b - \delta)],\]
where $\mathcal B(\mb C)$ is obtained from $\mathcal A(\mb C)$ by adding relations $[A]+[C]=[B]$ for all exact sequences $\begin{tikzcd}[sep=small] 0 \rar & A \rar & B \rar & C \rar & 0, \end{tikzcd}$ and $\delta > 0$ is again sufficiently small.

Since $F$ is constructible, the maps $dF_{\mathcal A}$ and $dF_{\mathcal B}$ have Möbius inversions (\cite[Theorem 4.1]{patel16}), i.e.~functions $F_{\mathcal A} \colon \Dgm_\Rset \to \mathcal A(\mb C)$ and $F_{\mathcal B} \colon \Dgm_\Rset \to \mathcal B(\mb C)$ with finite support such that
\[\sum_{\mb x \geq \mb a} F_{\mathcal A}(\mb x) = dF_{\mathcal A}(\mb a) \text{ and } \sum_{\mb x \geq \mb a} F_{\mathcal B}(\mb x) = dF_{\mathcal B}(\mb a)\]
for all $\mb a \in \Dgm_\Rset$. These functions $F_{\mathcal A}$ and $F_{\mathcal B}$ are called the type $\mathcal A$ and type $\mathcal B$ persistence diagrams of $F$. Since in this article we always assume that $\mb C$ is Abelian, we will only focus on the type $\mathcal B$ diagrams.

\begin{defn}
We define a preorder for the Grothedieck group $\mathcal B(\mb C)$ by setting
\[x \leq y \iff \text{there exists } A \in \mb C \text{ such that } x + [A] = y.\]
This gives a preorder for $\mb C$
\[A \leq B \iff [A] \leq [B].\]
The type $\mathcal B$ persistence diagrams of constructible persistence modules are preordered by setting for all constructible $F,G \colon \Rset \to \mb C$
\[F_{\mathcal B} \preceq G_{\mathcal B} \iff \sum_{\mb x \geq \mb a} F_{\mathcal B}(\mb x) \leq \sum_{\mb x \geq \mb a} G_{\mathcal B}(\mb x) \text{ for all } \mb a \in \Dgm_\Rset.\]
Since $F_{\mathcal B}$ and $G_{\mathcal B}$ are Möbius inversions of $dF_{\mathcal B}$ and $dG_{\mathcal B}$, this is equivalent to
\[dF_{\mathcal B}(\mb a) \leq dG_{\mathcal B}(\mb a) \text{ for all } \mb a \in \Dgm_\Rset.\]
\end{defn}

\begin{defn}
The erosion distance between type $\mathcal B$ persistence diagrams of constructible persistence modules $F,G$ is
\[\dist_E(F,G) = \inf \{\epsilon \geq 0 \mid \nabla_{\Omega_\epsilon} F_{\mathcal B} \preceq G_{\mathcal B} \text{ and } \nabla_{\Omega_\epsilon} G_{\mathcal B} \preceq F_{\mathcal B}\},\]
where $\Omega$ is the usual superlinear family of $\Rset$, $\Omega_\epsilon(a) = a + \epsilon$.
\end{defn}

Once again, using the fact that $F_{\mathcal B}$ and $G_{\mathcal B}$ are Möbius inversions, these inequalities are equivalent to
\[\nabla_{\Omega_\epsilon} dF_{\mathcal B} \leq dG_{\mathcal B} \text{ and } \nabla_{\Omega_\epsilon} dG_{\mathcal B} \leq dF_{\mathcal B},\]
where the inequalities are pointwise inequalities of functions, i.e.~$\nabla_{\Omega_\epsilon} dF_{\mathcal B} \leq dG_{\mathcal B} \iff \nabla_{\Omega_\epsilon} dF_{\mathcal B}(\mb x) \leq dG_{\mathcal B}(\mb x)$ for all $\mb x \in \Dgm_\Rset$.

This way of getting an erosion distance between persistence modules doesn't generalize to arbitrary preordered sets $\mb P$ since we need the $\delta$ in the definition of $dF_{\mathcal B}$. Fortunately, forgetting the $\delta$ in the definition turns out to give the same distance as the next proposition and corollary show.

\begin{prop}
Let $F, G \colon \Rset \to \mb C$ be constructible persistence modules and $\epsilon \in [0,\infty)$. Define
\[\mathcal F \colon \Dgm_\Rset \to \mb C,\ (a,b) \mapsto \im F(a < b),\]
and
\[\mathcal G \colon \Dgm_\Rset \to \mb C,\ (a,b) \mapsto \im G(a < b).\]
Now
\[\nabla_{\Omega_\epsilon} \mathcal F \leq \mathcal G \iff \nabla_{\Omega_\epsilon} dF_{\mathcal B} \leq dG_{\mathcal B} \iff \nabla_{\Omega_\epsilon} F_{\mathcal B} \preceq G_{\mathcal B}.\]
\end{prop}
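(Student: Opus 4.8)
The plan is to prove the two biconditionals separately, and the only non-trivial content is the first one, $\nabla_{\Omega_\epsilon}\mathcal F \leq \mathcal G \iff \nabla_{\Omega_\epsilon}dF_{\mathcal B} \leq dG_{\mathcal B}$. The second biconditional, $\nabla_{\Omega_\epsilon}dF_{\mathcal B} \leq dG_{\mathcal B} \iff \nabla_{\Omega_\epsilon}F_{\mathcal B} \preceq G_{\mathcal B}$, is essentially immediate from the remarks already made in the text: $F_{\mathcal B}$ and $G_{\mathcal B}$ are the M\"obius inversions of $dF_{\mathcal B}$ and $dG_{\mathcal B}$, and $\nabla_{\Omega_\epsilon}$ commutes with taking partial sums $\sum_{\mathbf x \geq \mathbf a}(-)$ over the up-set, since $\mathbf x \geq (a-\epsilon, b+\epsilon) \iff \Omega_\epsilon$-shift of $\mathbf x \geq (a,b)$ — so I would just unfold the definition of $\preceq$ and cite the discussion following the definition of the erosion distance for type $\mathcal B$ diagrams.

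For the first biconditional, I would fix $(a,b) \in \Dgm_\Rset$ and compare $\mathcal F(\Omega_\epsilon^{-1}a, \Omega_\epsilon b) = \im F(a-\epsilon < b+\epsilon)$ with $dF_{\mathcal B}(\Omega_\epsilon^{-1}a, \Omega_\epsilon b) = [\im F(a-\epsilon < b+\epsilon-\delta)]$ for small $\delta > 0$. The crucial observation is that constructibility forces $\im F(a-\epsilon < b+\epsilon-\delta) \cong \im F(a-\epsilon < b+\epsilon)$ once $\delta$ is small enough: by the constructibility conditions, for $p$ fixed the isomorphism type of $\im F(p < q)$ is locally constant in $q$ away from the finite set $S$ (more precisely, on each half-open interval $[s_i, s_{i+1})$ the maps $F(q \leq q')$ are isomorphisms, hence $\im F(p < q) \cong \im F(p < q')$ there), so shrinking $\delta$ to $0$ does not change the isomorphism class, and in particular does not change the class in $\mathcal B(\mb C)$. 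Therefore $dF_{\mathcal B}(a,b) = [\mathcal F(a,b)]$ pointwise on $\Dgm_\Rset$, and likewise for $G$. Hence for every $(a,b)$,
\[
\nabla_{\Omega_\epsilon}dF_{\mathcal B}(a,b) = [\im F(a-\epsilon < b+\epsilon)] = [\mathcal F(\Omega_\epsilon^{-1}a, \Omega_\epsilon b)] = [\nabla_{\Omega_\epsilon}\mathcal F(a,b)],
\]
and similarly $dG_{\mathcal B}(a,b) = [\mathcal G(a,b)]$. Since the preorder on $\mb C$ is by definition $A \leq B \iff [A] \leq [B]$ in $\mathcal B(\mb C)$, the pointwise inequality $\nabla_{\Omega_\epsilon}\mathcal F(a,b) \leq \mathcal G(a,b)$ in $\mb C$ is literally the same statement as $\nabla_{\Omega_\epsilon}dF_{\mathcal B}(a,b) \leq dG_{\mathcal B}(a,b)$ in $\mathcal B(\mb C)$, and taking the conjunction over all $(a,b)$ gives the desired equivalence.

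I expect the main obstacle to be bookkeeping around the edge of $\Dgm_\Rset$ and the parameter $\delta$: one must check that a single $\delta$ can be chosen that simultaneously works at the point $(a-\epsilon, b+\epsilon)$ (this is already built into the definition of $dF_{\mathcal B}$, where $\delta$ is allowed to depend on the point), and that $b+\epsilon-\delta > a-\epsilon$ so the expression $\im F(a-\epsilon < b+\epsilon-\delta)$ makes sense — which holds for $\delta$ small since $a-\epsilon < b+\epsilon$ strictly. Beyond that, the argument is a direct unwinding of definitions together with the local-constancy consequence of constructibility, so I would keep the write-up short: establish $dF_{\mathcal B} = [\mathcal F]$ and $dG_{\mathcal B} = [\mathcal G]$ pointwise, note $\nabla_{\Omega_\epsilon}$ intertwines with the bracket $[-]$, invoke the definition of the preorder on $\mb C$ for the first equivalence, and invoke the M\"obius-inversion remark for the second.
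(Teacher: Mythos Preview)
Your argument contains a genuine gap: the pointwise identity $dF_{\mathcal B}(a,b) = [\mathcal F(a,b)]$ is false in general. The map $q \mapsto \im F(a < q)$ is, by constructibility, locally constant on each half-open interval $[s_i,s_{i+1})$, hence \emph{right}-continuous in $q$; but the definition $dF_{\mathcal B}(a,b) = [\im F(a < b-\delta)]$ for small $\delta>0$ records the \emph{left} limit at $b$. At a critical value $b = s_i$ these can disagree. Concretely, for the interval module $F$ on $[0,1)$ in $\Vect$ one has $\mathcal F(\tfrac12,1) = \im F(\tfrac12 < 1) = 0$, while $dF_{\mathcal B}(\tfrac12,1) = [\im F(\tfrac12 < 1-\delta)] = [k]$. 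So the sentence ``shrinking $\delta$ to $0$ does not change the isomorphism class'' is correct for the family $\{\im F(a<b-\delta)\}_{\delta>0}$, but the jump to ``$dF_{\mathcal B}(a,b) = [\mathcal F(a,b)]$'' is unwarranted at the critical points, and the first equivalence does not follow from your argument.

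The paper circumvents this by never asserting pointwise equality. For the direction $\nabla_{\Omega_\epsilon}\mathcal F \leq \mathcal G \Rightarrow \nabla_{\Omega_\epsilon}dF_{\mathcal B} \leq dG_{\mathcal B}$, it picks a single $\delta>0$ that works simultaneously for $F$ at $b+\epsilon$ and for $G$ at $b$, so that the desired inequality at $(a,b)$ becomes exactly $\nabla_{\Omega_\epsilon}\mathcal F(a,b-\delta) \leq \mathcal G(a,b-\delta)$, which holds by hypothesis at the shifted point. For the converse, it uses right-continuity: choose $\delta>0$ with $F(b+\epsilon \leq b+\epsilon+\delta')$ and $G(b \leq b+\delta')$ isomorphisms for all $0<\delta'\leq\delta$, so that $\nabla_{\Omega_\epsilon}dF_{\mathcal B}(a,b+\delta) = [\im F(a-\epsilon<b+\epsilon)]$ and $dG_{\mathcal B}(a,b+\delta) = [\im G(a<b)]$; the hypothesis at $(a,b+\delta)$ then gives the inequality for $\mathcal F,\mathcal G$ at $(a,b)$. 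Your treatment of the second biconditional via M\"obius inversion is fine and matches the paper.
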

\begin{proof}
The right-hand equivalence follows directly from the definition of the rightmost inequality and by the definition of the Möbius inversion. Let $(a,b) \in \Dgm_\Rset$ and let's first assume that $\nabla_{\Omega_\epsilon} \mathcal F \leq \mathcal G$. Now, by the definition of $dF_{\mathcal B}$ and $dG_{\mathcal B}$ there exists $\delta > 0$ such that
\[\nabla_{\Omega_\epsilon} dF_{\mathcal B}(a,b) = [\im F(a - \epsilon < b + \epsilon - \delta)]\]
and
\[dG_{\mathcal B}(a,b) = [\im G(a < b - \delta)].\]
Hence
\begin{align*}
\nabla_{\Omega_\epsilon} dF_{\mathcal B}(a,b) \leq dG_{\mathcal B}(a,b) &\iff [\im F(a - \epsilon < b + \epsilon - \delta)] \leq [\im G(a < b - \delta)]
\\&\iff \im F(a - \epsilon < b + \epsilon - \delta) \leq \im G(a < b - \delta)
\\&\iff \nabla_{\Omega_\epsilon} \mathcal F(a,b-\delta) \leq \mathcal G(a,b-\delta).
\end{align*}
The last inequality holds by assumption, so $\nabla_{\Omega_\epsilon} dF_{\mathcal B} \leq dG_{\mathcal B}$.

Now, let's assume that $\nabla_{\Omega_\epsilon} dF_{\mathcal B} \leq dG_{\mathcal B}$ and let $(a,b) \in \Dgm_\Rset$. Since $F$ and $G$ are constructible, there exists a small enough $\delta > 0$ such that for all $0 < \delta' \leq \delta$
\[F(b+\epsilon < b+\epsilon+\delta') \colon F(b+\epsilon) \cong F(b+\epsilon+\delta')\]
and
\[G(b < b+\delta') \colon G(b) \cong G(b+\delta'),\]
i.e.~the morphisms are isomorphisms. Hence
\[\nabla_{\Omega_\epsilon} dF_{\mathcal B}(a,b+\delta) = [\im F(a-\epsilon < b+\epsilon)]\]
and
\[dG_{\mathcal B}(a,b+\delta) = [\im G(a < b)].\]
Now
\begin{align*}
\nabla_{\Omega_\epsilon} \mathcal F(a,b) \leq \mathcal G(a,b) &\iff \im F(a-\epsilon < b+\epsilon) \leq \im G(a < b)
\\&\iff [\im F(a-\epsilon < b+\epsilon)] \leq [\im G(a < b)]
\\&\iff \nabla_{\Omega_\epsilon} dF_{\mathcal B}(a,b+\delta) \leq dG_{\mathcal B}(a,b+\delta).
\end{align*}
Again, the last inequality holds by assumption, so $\nabla_{\Omega_\epsilon} \mathcal F \leq \mathcal G$.
\end{proof}

\begin{cor}
Let $F$ and $G$ be constructible persistence modules. Then
\[\dist_E^{\Omega}(\mathcal F, \mathcal G) = \dist_E(F_{\mathcal B}, G_{\mathcal B}).\]
\end{cor}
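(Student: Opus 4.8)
The plan is to read off the corollary directly from the previous proposition, which already does all the work. Recall that by definition
\[\dist_E^{\Omega}(\mathcal F, \mathcal G) = \inf\big(\{\epsilon \geq 0 \mid \nabla_{\Omega_\epsilon}\mathcal F \leq \mathcal G \text{ and } \nabla_{\Omega_\epsilon}\mathcal G \leq \mathcal F\} \cup \{\infty\}\big),\]
while
\[\dist_E(F_{\mathcal B},G_{\mathcal B}) = \inf\big(\{\epsilon \geq 0 \mid \nabla_{\Omega_\epsilon}F_{\mathcal B} \preceq G_{\mathcal B} \text{ and } \nabla_{\Omega_\epsilon}G_{\mathcal B} \preceq F_{\mathcal B}\} \cup \{\infty\}\big).\]
So it suffices to show that these two subsets of $[0,\infty)$ coincide, and then the infima (of the same set, each with $\{\infty\}$ adjoined) agree.

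First I would fix $\epsilon \in [0,\infty)$ and apply the previous proposition to the pair $(F,G)$, which gives $\nabla_{\Omega_\epsilon}\mathcal F \leq \mathcal G \iff \nabla_{\Omega_\epsilon}F_{\mathcal B} \preceq G_{\mathcal B}$. Then I would apply the very same proposition to the pair $(G,F)$ in place of $(F,G)$ — its statement is symmetric in the two modules — to obtain $\nabla_{\Omega_\epsilon}\mathcal G \leq \mathcal F \iff \nabla_{\Omega_\epsilon}G_{\mathcal B} \preceq F_{\mathcal B}$. Conjoining these two equivalences shows that $\epsilon$ lies in the first set if and only if it lies in the second, for every $\epsilon$; hence the sets are equal and the corollary follows.

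There is no real obstacle here: the content is entirely in the preceding proposition, and the corollary is just the observation that an equivalence holding for each $\epsilon$ forces equality of the two infima. The only point worth a word of care is that one must invoke the proposition twice, once in each direction, to cover both inequalities in the definition of the erosion distance; but since the proposition treats $F$ and $G$ on equal footing this is immediate.
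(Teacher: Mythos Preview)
Your proposal is correct and is precisely the intended argument: the paper states the corollary without proof, treating it as immediate from the preceding proposition, and your write-up spells out exactly that---apply the proposition once to $(F,G)$ and once to $(G,F)$ to identify the two index sets, whence the infima agree.
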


\subsection{Erosion distance for persistence modules}\label{sec: subsection Erosion distance for persistence modules}

In this subsection we extend the idea of the previous subsection for persistence modules $\mb P \to \mb C$, where $\mb P$ is a preordered set and $\mb C$ is an Abelian category with a preorder for its objects.

If we have a sublinear projection $\omega$, or a linear family $\Omega$, Proposition \ref{thm: metric} shows that this information is enough to make the erosion distance an extended pseudo-metric for functions $\Dgm_{\mb P} \to \mb C$. However, if we have a superlinear family that is not linear, we need to restrict ourselves to decreasing functions. To make sure that the functions induced by persistence modules, i.e. the rank invariants, are indeed decreasing, we first need to consider which preorders of $\mb C$ are suitable. A natural idea is to require objects to be larger than their subobjects and quotients, and this turns out to be enough; preorders that satisfy this condition will be said to respect mono- and epimorphisms.

\begin{lemma}\label{lem: respects monos and epis}
Let $\mb C$ be an Abelian category equipped with a preorder $\leq$ for its objects such that for all $A,B \in \mb C$
\[A \hookrightarrow B \Rightarrow A \leq B\]
and
\[A \twoheadrightarrow B \Rightarrow A \geq B.\]
Then, for all morphisms $f \colon A \to B$
\begin{enumerate}[\itshape i)]
\item $\ker f \leq A$ and $\coker f \leq B$,
\item $\im f \leq A, B$,
\item if $f$ is an isomorphism, then $A \leq B$ and $B \leq A$.
\end{enumerate}
Additionally, every preorder that satisfies condition i) also satisfies
\[A \hookrightarrow B \Rightarrow A \leq B\]
and
\[A \twoheadrightarrow B \Rightarrow A \geq B.\]
\end{lemma}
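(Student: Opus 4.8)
The plan is to verify each implication directly from the two hypotheses on the preorder, using only standard facts about Abelian categories. The key structural inputs are: every morphism $f\colon A\to B$ factors as an epimorphism $A\twoheadrightarrow \im f$ followed by a monomorphism $\im f\hookrightarrow B$; the kernel $\ker f\hookrightarrow A$ is a monomorphism; the cokernel $B\twoheadrightarrow\coker f$ is an epimorphism; and an isomorphism is simultaneously a monomorphism and an epimorphism.

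For part \emph{i)}, I would note that $\ker f\hookrightarrow A$ is monic, so the first hypothesis gives $\ker f\leq A$; dually, $B\twoheadrightarrow\coker f$ is epic, so the second hypothesis gives $\coker f\leq B$. For part \emph{ii)}, the epi-mono factorization gives $A\twoheadrightarrow\im f$ and $\im f\hookrightarrow B$; applying the second hypothesis to the first and the first hypothesis to the second yields $\im f\leq A$ and $\im f\leq B$ respectively. For part \emph{iii)}, if $f$ is an isomorphism it is in particular a monomorphism, so $A\leq B$, and in particular an epimorphism, so $A\geq B$, i.e.\ $B\leq A$.

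For the final claim, suppose the preorder satisfies condition \emph{i)} for all morphisms. Given a monomorphism $g\colon A\hookrightarrow B$, I would observe that $g$ is the kernel of its cokernel $B\to\coker g$ (a standard property of Abelian categories), hence there is some morphism whose kernel is isomorphic to $A$; more directly, applying the $\ker$-part of \emph{i)} to $\coker g\colon B\to\coker g$ gives $\ker(\coker g)\leq B$, and since $\ker(\coker g)\cong A$ via $g$ we get $A\leq B$. Dually, given an epimorphism $g\colon A\twoheadrightarrow B$, the $\coker$-part of \emph{i)} applied to $\ker g\colon \ker g\to A$ gives $\coker(\ker g)\leq A$, and $\coker(\ker g)\cong B$ since $g$ is the cokernel of its kernel, so $B\leq A$. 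Here I am implicitly using that a preorder is well-defined on isomorphism classes, which follows from part \emph{iii)} once \emph{i)} is known to imply the mono/epi conditions — so this last step should be phrased carefully, perhaps by first deducing \emph{iii)} from \emph{i)} via the mono/epi conditions, or by noting the hypotheses are meant up to isomorphism throughout.

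The routine part is the bookkeeping of epi-mono factorizations; the only genuinely delicate point — and the one I would flag as the main obstacle — is the direction ``\emph{i)} implies the mono/epi conditions,'' since it requires invoking the characterization of monomorphisms as kernels and epimorphisms as cokernels in an Abelian category, together with the tacit convention that the preorder on objects is invariant under isomorphism. Once that convention is made explicit, the argument is a short diagram chase with no real computation.
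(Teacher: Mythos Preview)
Your argument for \emph{i)}--\emph{iii)} is exactly what the paper has in mind (it dismisses these as ``trivial''), and your approach to the final claim---using that a monomorphism is the kernel of its cokernel and dually---is precisely the paper's one-line justification.

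The only place where you hesitate unnecessarily is the flagged ``obstacle'' about isomorphism invariance. There is no circularity: when $g\colon A\hookrightarrow B$ is a monomorphism in an Abelian category, the pair $(A,g)$ is itself a kernel of $\coker g$, not merely isomorphic to one. So condition \emph{i)}, applied to the morphism $\coker g\colon B\to\coker g$, reads literally as $A\leq B$ with no intermediate isomorphism to negotiate. Dually for epimorphisms. You can therefore drop the caveat entirely.
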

\begin{proof}
Cases \emph{i)}-\emph{iii)} are trivial. The last remark follows from the fact that in an Abelian category every monomorphism is a kernel morphism and every epimorphism is a cokernel morphism.
\end{proof}

\begin{defn}
A preorder of an Abelian category $\mb C$ that satisfies the conditions in the previous lemma is said to \emph{respect mono- end epimorphisms}. Throughout the rest of this paper, $\mb C$ is an Abelian category equipped with a preorder that respects mono- and epimorphisms unless otherwise stated.
\end{defn}

\begin{lemma}\label{lemma: im order}
Let $f \colon A \to B$, $g \colon A' \to A$, $h \colon B \to B'$ be morphisms in $\mb C$ and denote $f' = hfg$. Then
\[\im f' \leq \im f.\]
\end{lemma}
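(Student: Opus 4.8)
The idea is to break the claim into the two elementary cases ``pre-composition'' and ``post-composition'' and then chain them. Concretely, I will show that for any $g \colon A' \to A$ one has $\im(fg) \leq \im f$, and for any $h \colon B \to B'$ one has $\im(hf) \leq \im f$; applying the second with $f$ replaced by $fg$ and then the first gives $\im f' = \im(hfg) \leq \im(fg) \leq \im f$ by transitivity of the preorder. The only tool needed beyond Lemma \ref{lem: respects monos and epis} is the epi--mono (image) factorization available in any Abelian category, together with uniqueness of images.

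First I would fix the canonical factorization $f = \iota \pi$, where $\pi \colon A \twoheadrightarrow \im f$ is epi and $\iota \colon \im f \hookrightarrow B$ is the inclusion of the image. For the \emph{post-composition} case, write $hf = (h\iota)\pi$. Since $\pi$ is epi and composites of epis are epi, taking the image factorization of $h\iota$ and pre-composing its epi part with $\pi$ yields an epi--mono factorization of $hf$; hence $\im(hf) = \im(h\iota)$. But $h\iota$ has domain $\im f$, so its image factorization exhibits an epimorphism $\im f \twoheadrightarrow \im(h\iota)$, and the preorder respecting epimorphisms gives $\im(hf) = \im(h\iota) \leq \im f$. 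Dually, for the \emph{pre-composition} case, write $fg = \iota(\pi g)$; since $\iota$ is mono and composites of monos are mono, composing $\iota$ with the mono part of the image factorization of $\pi g$ produces an epi--mono factorization of $fg$, so $\im(fg) = \im(\pi g)$. Here $\pi g$ has codomain $\im f$, so its image factorization exhibits a monomorphism $\im(\pi g) \hookrightarrow \im f$, and the preorder respecting monomorphisms gives $\im(fg) = \im(\pi g) \leq \im f$.

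Finally I would assemble the chain: applying the post-composition inequality with $fg$ in the role of $f$ gives $\im(hfg) \leq \im(fg)$, the pre-composition inequality gives $\im(fg) \leq \im f$, and transitivity yields $\im f' \leq \im f$. I do not expect a genuine obstacle; the single point requiring a sentence of care is the two identities $\im(\phi e) = \im \phi$ for $e$ epi and $\im(m\phi) = \im \phi$ for $m$ mono, which follow from uniqueness of the epi--mono factorization, and everything else is a direct invocation of Lemma \ref{lem: respects monos and epis}.
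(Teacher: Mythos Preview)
Your argument is correct. Both proofs rest on the same ingredients---the epi--mono factorization in an Abelian category and the fact that the preorder respects monos and epis---but the organization differs. The paper does everything in a single diagram: it factors $f$ through $\im f$, then takes the image of the composite $\im f \hookrightarrow B \to B'$ as the intermediate object, obtaining the chain $\im f' \hookrightarrow \im(\im f \to B') \twoheadleftarrow \im f$ directly from the universal property of images. You instead isolate two reusable sublemmas, $\im(fg) \leq \im f$ and $\im(hf) \leq \im f$, and chain them through the intermediate object $\im(fg)$. Your route is slightly more modular and makes the two halves (mono side, epi side) explicit and symmetric; the paper's is a bit terser. Neither approach has any real advantage in difficulty or generality.
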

\begin{proof}
By using the universal property of images we can construct the following commutative diagram
\[\begin{tikzcd}[column sep=small]
A' \arrow[to path= |- (\tikztotarget)]{dddrrr}\ar{dr}{g}\ar{rrrr}{f'} &&&& B' \\
& A \ar{d} \ar{rr}{f} && B \ar{ur}{h} \\
& \im f \arrow[hook]{urr} \arrow[twoheadrightarrow]{rr} && \im\big(\im f \to B'\big) \arrow[hook]{uur} \\
&&& \im f' \arrow[hook]{u}{\exists} \arrow[hook, to path= -| (\tikztotarget)]{ruuu}
\end{tikzcd}\]
Hence
\[\im f' \leq \im\big(\im f \to B'\big) \leq \im f.\]
\end{proof}

\begin{defn}[Rank invariant]\label{def: persmod erosion dist}
To every persistence module $F \colon \mb P \to \mb C$, i.e.~a functor, we attach a map $\mathcal F \colon \Dgm_{\mb P} \to \mb C$ by setting for each $(a,b) \in \Dgm_{\mb P}$
\[\mathcal F(a,b) = \im F(a < b).\]
We call this map the \emph{rank invariant} of $F$. In addition, let $\Omega \colon [0,\infty) \to \Trans P$ be a superlinear family or $\omega \colon \Trans P \to [0,\infty]$ be a sublinear projection. We define the \emph{erosion distance} $\dist_E$ of a pair of persistence modules $F,G \colon \mb P \to \mb C$ to be
\[\dist_E^\Omega(F,G) = \dist_E^{\Omega}(\mathcal F,\mathcal G)\]
or
\[\dist_E^\omega(F,G) = \dist_E^{\omega}(\mathcal F,\mathcal G).\]
\end{defn}

\begin{prop}\label{thm: decreasing}
For every persistence module $F \colon \mb P \to \mb C$ the map $\mathcal F$ is decreasing.
\end{prop}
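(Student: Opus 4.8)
The plan is to unwind the definition of ``decreasing'' and then reduce the statement to a single application of Lemma \ref{lemma: im order} together with the functoriality of $F$. A map $\Dgm_{\mb P} \to \mb G$ is decreasing precisely when it reverses the preorder of $\Dgm_{\mb P}$ (this is the orientation that makes $\nabla_{\Gamma,\K} f \leq f$ hold for decreasing $f$, as used in Proposition \ref{thm: metric}), so what I must show is that $(a,b) \leq (a',b')$ in $\Dgm_{\mb P}$ implies $\mathcal F(a,b) \geq \mathcal F(a',b')$. By the definition of the preorder on $\Dgm_{\mb P}$, the hypothesis $(a,b) \leq (a',b')$ says exactly that $a' \leq a$ and $b \leq b'$.

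First I would observe that the chain $a' \leq a < b \leq b'$ gives $a' < b'$, so $(a',b') \in \Dgm_{\mb P}$ and the morphism $F(a' < b')$ is defined. Since $F$ is a functor, composing the structure morphisms yields
\[F(a' < b') = F(b \leq b') \circ F(a < b) \circ F(a' \leq a),\]
where $F(a' \leq a)$ and $F(b \leq b')$ are understood to be identity morphisms in the degenerate cases $a' = a$ or $b = b'$.

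Then I would apply Lemma \ref{lemma: im order} with $f = F(a < b)$, $g = F(a' \leq a)$, and $h = F(b \leq b')$, so that $f' = hfg = F(a' < b')$. The lemma gives $\im f' \leq \im f$, that is, $\mathcal F(a',b') = \im F(a' < b') \leq \im F(a < b) = \mathcal F(a,b)$. Hence $\mathcal F$ reverses the preorder, i.e.\ it is decreasing.

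There is essentially no obstacle here: the only points requiring a little care are verifying that $F(a' < b')$ is a legitimate morphism (this is where the strict inequality built into $\Dgm_{\mb P}$, via $a' < b'$, is used) and correctly matching the orientation conventions so that ``decreasing'' is indeed the order-reversing condition.
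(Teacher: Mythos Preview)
Your proof is correct and follows exactly the same route as the paper: use functoriality to factor $F(a' < b')$ through $F(a < b)$ and then apply Lemma~\ref{lemma: im order}. You are simply a bit more explicit about the orientation of ``decreasing'' and the verification that $a' < b'$, but the argument is identical.
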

\begin{proof}
Let $(a,b) \leq (a',b')$, i.e.~$a' \leq a < b \leq b'$. Since
\[F(a' < b') = F(b \leq b')F(a < b)F(a' \leq a),\]
Lemma \ref{lemma: im order} says that
\begin{align*}
\mathcal F(a',b') &= \im F(a' < b')
\\&\leq \im F(a < b) 
\\&= \mathcal F(a,b).
\end{align*}
\end{proof}

\begin{cor}\label{cor: metric for persistence modules}
The erosion distances for persistence modules $\dist_E^{\Omega}$ and $\dist_E^{\omega}$ are extended pseudo-metrics.
\end{cor}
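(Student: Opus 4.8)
The plan is to deduce this immediately from Proposition \ref{thm: metric} together with Proposition \ref{thm: decreasing}. By Definition \ref{def: persmod erosion dist} the quantities $\dist_E^{\Omega}(F,G)$ and $\dist_E^{\omega}(F,G)$ are \emph{defined} to be $\dist_E^{\Omega}(\mathcal F,\mathcal G)$ and $\dist_E^{\omega}(\mathcal F,\mathcal G)$, so everything reduces to the behaviour of the erosion distance of maps $\Dgm_{\mb P}\to\mb C$ on the subclass consisting of rank invariants.

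First I would recall from Proposition \ref{thm: decreasing} that for \emph{every} persistence module $F\colon\mb P\to\mb C$ the rank invariant $\mathcal F$ is a decreasing map $\Dgm_{\mb P}\to\mb C$; here $\mb C$ is viewed as the preordered class $\mb G$, using that its preorder respects mono- and epimorphisms. Hence the assignment $F\mapsto\mathcal F$ takes values in the class of decreasing functions $\Dgm_{\mb P}\to\mb C$ that appears in Proposition \ref{thm: metric}\emph{i)}, and a fortiori in the class of all functions $\Dgm_{\mb P}\to\mb C$ that appears in \emph{ii)} and \emph{iii)}.

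Then I would invoke Proposition \ref{thm: metric}: part \emph{i)} (or \emph{ii)} when $\Omega$ is linear) gives that $\dist_E^{\Omega}$ is an extended pseudo-metric on the class of decreasing functions $\Dgm_{\mb P}\to\mb C$, and part \emph{iii)} gives that $\dist_E^{\omega}$ is an extended pseudo-metric on the class of all functions $\Dgm_{\mb P}\to\mb C$. Finally I would note that restricting an extended pseudo-metric to any subclass of its domain again yields an extended pseudo-metric, since each of the defining properties --- non-negativity, symmetry, vanishing on the diagonal, and the triangle inequality --- is a statement about tuples of points that is simply inherited by the subclass. Applying this with the subclass $\{\mathcal F \mid F\colon\mb P\to\mb C\text{ a persistence module}\}$, pulled back along $F\mapsto\mathcal F$, shows that $\dist_E^{\Omega}$ and $\dist_E^{\omega}$ are extended pseudo-metrics on persistence modules.

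There is essentially no obstacle here: the only point worth a word is that persistence modules form a class rather than a set, but extended pseudo-metrics are allowed to take the value $\infty$ and are defined on classes, so this causes no difficulty; the content of the corollary is entirely carried by Propositions \ref{thm: metric} and \ref{thm: decreasing}.
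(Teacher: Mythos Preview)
Your proposal is correct and follows exactly the paper's approach: the paper's proof is a single sentence invoking Proposition \ref{thm: decreasing} together with Proposition \ref{thm: metric} \emph{i)} and \emph{iii)}, which is precisely what you do (with some extra, harmless elaboration about restricting a pseudo-metric to a subclass).
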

\begin{proof}
The claim follows directly from Proposition \ref{thm: decreasing} and Proposition \ref{thm: metric} \emph{i)} and \emph{iii)}.
\end{proof}

Now that we have shown that the erosion distances are extended pseudo-metrics, we'll consider stability with respect to the interleaving distance introduced in \cite{bubenik15}.

\begin{defn}[{\cite{bubenik15}}]
Let $\Gamma, \K \in \Trans P$ and let $F,G \colon \mb P \to \mb C$ be persistence modules. A $(\Gamma, \K)$-\emph{interleaving} between $F$ and $G$ is a pair of natural transformations $(\varphi, \psi)$
\[\varphi \colon F \To G\Gamma,\ \psi \colon G \To F\K,\]
such that the following diagrams commute:
\[\begin{tikzcd}[column sep=small]
F \arrow[Rightarrow]{dr}{\varphi} \arrow[Rightarrow]{rr} && F\K\Gamma \\
& G\Gamma \arrow[Rightarrow]{ur}{\psi}
\end{tikzcd}
\quad
\begin{tikzcd}[column sep=small]
G \arrow[Rightarrow]{dr}{\psi}\arrow[Rightarrow]{rr} && G\Gamma\K \\
& F\K \arrow[Rightarrow]{ur}{\varphi}
\end{tikzcd}\]
We say that $F$ and $G$ are $\epsilon$-\emph{interleaved with respect to} $\Omega$ if they are $(\Omega_\epsilon, \Omega_\epsilon)$-interleaved, and similarly that they are $\epsilon$-\emph{interleaved with respect to} $\omega$ if there exist $\Gamma, \K \in \Trans P$ such that $\omega_\Gamma,\omega_\K \leq \epsilon$ and $F$ and $G$ are $(\Gamma,\K)$-interleaved.

We define the \emph{interleaving distances} $\dist_I^{\Omega}$ and $\dist_I^{\omega}$ by setting
\[\dist_I^{\Omega}(F,G) = \inf \big(\{\epsilon \mid F \text{ and } G \text{ are } \epsilon \text{-interleaved w.r.t. } \Omega\} \cup \{\infty\}\big)\]
\[\dist_I^{\omega}(F,G) = \inf \big(\{\epsilon \mid F \text{ and } G \text{ are } \epsilon \text{-interleaved w.r.t. } \omega\} \cup \{\infty\}\big).\]
\end{defn}

\begin{remark}
Note that since in \cite{bubenik15} a translation of $\mb P$ is not required to be an automorphism, and instead is only required to be an endofunctor of $\mb P$ with a natural transformation from the identity functor, our definition of the interleaving distance is slightly different. Specifically, our definition of $\dist_I^{\Omega}$ is precisely the same, but for us the choice of $\Omega$ is more restricted, and our definition of $\dist_I^{\omega}$ may be larger than the distance defined in \cite{bubenik15}.
\end{remark}

\begin{thm}[Stability of the erosion distance]\label{thm: stability of erosion wrt interleaving}
Let $F,G \colon \mb P \to \mb C$ be persistence modules. Then
\[\dist_E(F,G) \leq \dist_I(F,G),\]
where either $\dist_E = \dist_E^{\Omega}$ and $\dist_I = \dist_I^{\Omega}$, or $\dist_E = \dist_E^{\omega}$ and $\dist_I = \dist_I^{\omega}$.
\end{thm}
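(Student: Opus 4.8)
The plan is to show that an $\epsilon$-interleaving of $F$ and $G$ forces the erosion inequalities $\nabla f \leq g$ and $\nabla g \leq f$ for the corresponding rank invariants $\mathcal F,\mathcal G$, with the same translations. I would treat the two cases uniformly: in both the superlinear-family case and the sublinear-projection case it suffices to prove the implication
\[
F,G \text{ are } (\Gamma,\K)\text{-interleaved} \ \Longrightarrow\ \nabla_{\Gamma,\K}\mathcal F \leq \mathcal G \text{ and } \nabla_{\K,\Gamma}\mathcal G \leq \mathcal F,
\]
since taking infima over $\epsilon$ (with $\Gamma=\K=\Omega_\epsilon$ in the first case, and over all $\Gamma,\K$ with $\omega_\Gamma,\omega_\K\leq\epsilon$ in the second) then immediately yields $\dist_E(F,G)\leq\dist_I(F,G)$.

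So fix a $(\Gamma,\K)$-interleaving $(\varphi,\psi)$ with $\varphi\colon F\To G\Gamma$, $\psi\colon G\To F\K$, and fix $(a,b)\in\Dgm_{\mb P}$. I want $\nabla_{\Gamma,\K}\mathcal F(a,b)=\im F(\Gamma^{-1}a < \K b)\leq \mathcal G(a,b)=\im G(a<b)$. The key is to express the morphism $F(\Gamma^{-1}a<\K b)$, or rather a related morphism with the same image up to the preorder, as a composite that factors through $G(a<b)$. Concretely, naturality of $\psi$ applied to $a<b$ gives a commuting square relating $G(a<b)$ to $F(\K a<\K b)$ via the components $\psi_a,\psi_b$; precomposing with $\varphi_{\Gamma^{-1}a}\colon F(\Gamma^{-1}a)\to G\Gamma(\Gamma^{-1}a)=G(a)$ and using the interleaving triangle (which says $\psi_{\Gamma^{-1}a}\varphi_{\Gamma^{-1}a}$ equals, or at least is comparable to, the structure map $F(\Gamma^{-1}a<\K\Gamma\cdot\Gamma^{-1}a)=F(\Gamma^{-1}a<\K a)$), I can exhibit the structure map $F(\Gamma^{-1}a<\K b)$ (using $\K a\leq \K b$, which holds since $a<b$ and $\K$ is a translation) as a factor of, or equal to, $F(\K b)\,\psi_b\,G(a<b)\,\varphi_{\Gamma^{-1}a}$ up to the comparisons. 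Then Lemma \ref{lemma: im order} gives $\im F(\Gamma^{-1}a<\K b)\leq \im G(a<b)$, which is exactly what is needed. The symmetric argument with the roles of $F,G$ and $\Gamma,\K$ swapped, using the other interleaving triangle, gives $\nabla_{\K,\Gamma}\mathcal G\leq\mathcal F$.

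The main obstacle I anticipate is bookkeeping: matching the subscripts on the interleaving triangles to the points $\Gamma^{-1}a$, $a$, $\K b$, etc., and making sure the required structure maps exist (i.e.\ that $\Gamma^{-1}a < \K b$ actually holds in $\Dgm_{\mb P}$, which follows from $\Gamma^{-1}a\leq a<b\leq\K b$ but needs $\Gamma^{-1}a\neq\K b$ — here one should note that if $\Gamma^{-1}a=\K b$ then the relevant image is a zero object and the inequality is automatic, or more simply that one only needs the morphism and Lemma \ref{lemma: im order} does not require strictness). Once the diagram is drawn correctly, each inequality is a one-line application of Lemma \ref{lemma: im order}; no properties of the preorder beyond "respects mono- and epimorphisms" are used, and no decreasing-ness of $\mathcal F$ or $\mathcal G$ is needed here (that was only needed for the triangle inequality in Proposition \ref{thm: metric}).
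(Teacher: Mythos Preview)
Your proposal is correct and matches the paper's proof essentially line for line: the paper reduces both cases to the implication $(\Gamma,\K)$-interleaved $\Rightarrow \nabla_{\Gamma,\K}\mathcal F \leq \mathcal G$ and $\nabla_{\K,\Gamma}\mathcal G \leq \mathcal F$, then for $(a,b)\in\Dgm_{\mb P}$ draws the commutative diagram coming from the interleaving triangle at $\Gamma^{-1}a$ together with naturality of $\psi$ to obtain the \emph{equality} $F(\Gamma^{-1}a<\K b)=\psi_b\circ G(a<b)\circ\varphi_{\Gamma^{-1}a}$, and finishes with Lemma~\ref{lemma: im order}. Your hedging (``equals, or at least is comparable to'') is unnecessary---it is an honest equality---and the strictness worry about $\Gamma^{-1}a<\K b$ is not discussed in the paper, but your proposed resolution is fine.
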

\begin{proof}
To prove the claim in both cases, it is enough to show that if we have a $(\Gamma,\K)$-interleaving between $F$ and $G$, then
\[\nabla_{\Gamma,\K} \mathcal F \leq \mathcal G,\ \nabla_{\K,\Gamma} \mathcal G \leq \mathcal F.\]
Let $(\varphi, \psi)$ be a $(\Gamma,\K)$-interleaving between $F$ and $G$. For every $(a,b) \in \Dgm_{\mb P}$ we get a commutative diagram
\[\begin{tikzcd}[column sep=small]
F(\Gamma^{-1} a) \ar{dr}{\varphi}\ar{rr} && F(\K a) \rar & F(\K b) \\
& G(a) \ar{ur}{\psi} \rar & G(b) \ar{ur}{\psi}
\end{tikzcd}\]
This shows that
\[F(\Gamma^{-1} a < \K b) = \psi_{b} \circ G(a < b) \circ \varphi_{\Gamma^{-1} a},\]
and then by Lemma \ref{lemma: im order}
\begin{align*}
\nabla_{\Gamma,\K}\mathcal F(a,b) &= \im F(\Gamma^{-1}a, \K b)
\\&\leq \im G(a < b)
\\&= \mathcal G(a,b).
\end{align*}
Hence $\nabla_{\Gamma,\K}\mathcal F \leq \mathcal G$. Similarly, we can show that $\nabla_{\K, \Gamma}\mathcal G \leq \mathcal F$.
\end{proof}

\section{Minimal preorders and the natural pseudo-distance}\label{sec: Minimal preorders and the natural pseudo-distance}

In this section we show how the distance $\dist_T$ of \cite{frosini13} is obtained as a special case of the erosion distance. The distance $\dist_T$ is an extended pseudo-metric for continuous functions $\varphi \colon X \to \Rset^n$ for some fixed $n \in \Zset_+$ and any topological space $X$. We also show that the recipe for getting a preorder of $\Ab$ that is used in \cite{frosini13} can be used in an arbitrary Abelian category $\mb C$, and that it gives the minimal preorder of $\mb C$ that respects mono- and epimorphisms.

Before looking at the relationship between our general erosion distance and the distance $\dist_T$, we start with some definitions and propositions to help us declutter the definition of $\dist_T$ and understand the preorder of $\Ab$ that is implicitly defined in \cite{frosini13}.

\begin{defn}
Let $\Omega \colon [0,\infty) \to \Transp{\Rset^n}$, $\Omega_\epsilon(\mb a) = \mb a + \bm{\epsilon}$, where $\bm{\epsilon} = (\epsilon, \dots, \epsilon)$. We denote the category of all Abelian groups by $\Ab$. We define a preorder for $\Ab$ by setting for every $A,B \in \Ab$
\[A \leq B \iff \text{there exists a subgroup } B' \subseteq B \text{ and an epimorphism } B' \twoheadrightarrow A.\]
\end{defn}

\begin{prop}
The relation $\leq$ defined above is a preorder for $\Ab$.
\end{prop}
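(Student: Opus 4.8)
The plan is to verify the two defining properties of a preorder, namely reflexivity and transitivity. (Antisymmetry is not required, since we only claim a preorder, not a partial order.)

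For reflexivity, given $A \in \Ab$, I would take $B' = A$ itself as the subgroup of $B = A$, together with the identity epimorphism $A \twoheadrightarrow A$. This immediately gives $A \leq A$.

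For transitivity, suppose $A \leq B$ and $B \leq C$. Then there is a subgroup $B' \subseteq B$ with an epimorphism $p \colon B' \twoheadrightarrow A$, and a subgroup $C' \subseteq C$ with an epimorphism $q \colon C' \twoheadrightarrow B$. The idea is to pull back $B'$ along $q$: set $C'' = q^{-1}(B') \subseteq C'$, which is a subgroup of $C'$ and hence of $C$. The restriction $q|_{C''} \colon C'' \to B'$ is still surjective onto $B'$, because $q$ was surjective onto all of $B$ and $B' \subseteq B$ (given $b' \in B'$, pick $c \in C'$ with $q(c) = b'$; then $c \in q^{-1}(B') = C''$). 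Composing with $p$ gives an epimorphism $p \circ q|_{C''} \colon C'' \twoheadrightarrow A$, and since $C'' \subseteq C$ is a subgroup, this witnesses $A \leq C$.

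None of this is hard; the only point requiring a moment's care is checking that the restriction of $q$ to the preimage $q^{-1}(B')$ remains surjective onto $B'$, which follows directly from the surjectivity of $q$ onto $B$. I would therefore expect the proof to be short, with the preimage-subgroup construction being the one step worth spelling out explicitly.
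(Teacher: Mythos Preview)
Your proof is correct and matches the paper's argument essentially line for line: the paper also dismisses reflexivity as trivial and, for transitivity, defines $C'' = f^{-1}(B')$ (your $q^{-1}(B')$) and observes that $C'' \twoheadrightarrow A$. You have in fact spelled out more carefully than the paper does why the restriction $q|_{C''}$ remains surjective onto $B'$.
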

\begin{proof}
Reflexivity is trivial. Let $A \leq B \leq C$, i.e.~there exist subgroups $B' \subseteq B$ and $C' \subseteq C$ such that $B' \twoheadrightarrow A$ and $f \colon C' \twoheadrightarrow B$. We define $C'' = f^{-1}(B')$. Now, clearly $C'' \twoheadrightarrow A$, so $A \leq C$.
\end{proof}

This preorder clearly respects mono- and epimorphisms. Hence, we get a stable extended pseudo-metric $\dist_E^{\Omega}$.
\begin{defn}
Let $F,G \colon \Rset^n \to \Ab$ be persistence modules. The erosion distance of $F$ and $G$ is
\[\dist_E^{\Omega}(F,G) = \dist_E^{\Omega}(\mathcal F,\mathcal G).\]
\end{defn}

This preorder turns out to be minimal among all preorders that respect mono- and epimorhisms in any Abelian category, as long as it actually defines a preorder. Even if it doesn't define a preorder, its transitive closure is the minimal preorder.

\begin{prop}\label{thm: min preorder}
Let $\leq$ be any preorder for the Abelian category $\mb C$ such that $\leq$ respects mono- and epimorphisms. Define a relation $R \subset \mb C \times \mb C$ by setting
\[aRb \iff \text{ there exists a diagram } a \twoheadleftarrow b' \hookrightarrow b\]
for all $a,b \in \mb C$. Then, for all $a,b \in \mb C$
\[aRb \Rightarrow a \leq b.\]
\end{prop}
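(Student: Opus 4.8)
The plan is to simply unwind the definition of the relation $R$ and then invoke the two defining properties of a preorder that respects mono- and epimorphisms, together with transitivity. Suppose $aRb$. By definition there is an object $b' \in \mb{C}$ together with an epimorphism $p \colon b' \twoheadrightarrow a$ and a monomorphism $i \colon b' \hookrightarrow b$.

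First, from the monomorphism $i \colon b' \hookrightarrow b$ and the assumption that $\leq$ respects monomorphisms, we obtain $b' \leq b$. Second, from the epimorphism $p \colon b' \twoheadrightarrow a$ and the assumption that $\leq$ respects epimorphisms, we obtain $b' \geq a$, i.e.\ $a \leq b'$. Finally, since $\leq$ is a preorder and hence transitive, $a \leq b' \leq b$ yields $a \leq b$, as claimed.

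There is no real obstacle here: the statement is essentially an immediate consequence of the two conditions packaged into ``respects mono- and epimorphisms'' plus transitivity of $\leq$. The only point worth emphasising is that both conditions are used at once, via the single intermediate object $b'$ which is simultaneously a subobject of $b$ and a quotient of which $a$ is the target; this is precisely why the subquotient relation $R$ is the natural candidate for the minimal preorder respecting mono- and epimorphisms, the reverse inclusion (that $R$, or its transitive closure, \emph{is} such a preorder) being the content of the subsequent discussion.
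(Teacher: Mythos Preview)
Your proof is correct and essentially identical to the paper's: both take the intermediate object $b'$ from the definition of $R$, use the mono- and epi-respecting conditions to obtain $a \leq b' \leq b$, and conclude by transitivity. The paper's version is just more terse.
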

\begin{proof}
Let $a,b \in \mb C$ such that $aRb$, i.e.~there exists $b' \in \mb C$ such that
\[a \twoheadleftarrow b' \hookrightarrow b.\]
Since $\leq$ respects mono- and epimorphisms, $a \leq b' \leq b$.
\end{proof}

\begin{cor}
Let $R$ be as in Proposition \ref{thm: min preorder} and let $\leq$ be its transitive closure. Then $\leq$ is minimal among all preorders of $\mb C$ that respect mono- and epimorphisms, i.e.~if $\preceq$ is another preorder that respects mono- and epimorphisms, then for all $A,B \in \mb C$
\[A \leq B \Rightarrow A \preceq B.\]
In addition, let $\mb P$ be a preordered set and fix a superlinear family $\Omega$ (resp. a sublinear projection $\omega$) of $\mb P$. Then, the erosion distance with respect to $\leq$ and $\Omega$ (resp. $\omega$) is maximal among all erosion distances of functions $\Dgm_{\mb P} \to \mb C$ with respect to $\Omega$ (resp. $\omega$).
\end{cor}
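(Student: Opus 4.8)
The plan is to prove the corollary in two parts, corresponding to the two assertions in its statement. For the first part, I would start from the observation that by Proposition \ref{thm: min preorder}, the relation $R$ is contained in any preorder $\preceq$ that respects mono- and epimorphisms. Since $\preceq$ is itself transitive, it therefore contains the transitive closure of $R$, which is exactly $\leq$. This immediately gives $A \leq B \Rightarrow A \preceq B$ for all $A,B \in \mb C$. A small preliminary point to address is that $\leq$, being the transitive closure of a reflexive relation $R$, is indeed a preorder; and that it respects mono- and epimorphisms, which follows because $A \hookrightarrow B$ gives $A R B$ via the diagram $A \twoheadleftarrow A \hookrightarrow B$ (using $\id_A$), and dually $A \twoheadrightarrow B$ gives $B R A$, so $A \leq B$ and $B \leq A$ respectively; hence Lemma \ref{lem: respects monos and epis} applies and the erosion distance with respect to $\leq$ is well-defined and an extended pseudo-metric by Corollary \ref{cor: metric for persistence modules}.

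For the second part, fix the preordered set $\mb P$ together with $\Omega$ (or $\omega$), and let $\preceq$ be any other preorder on $\mb C$ respecting mono- and epimorphisms. I would write $\dist_E^{\leq}$ and $\dist_E^{\preceq}$ for the two erosion distances of maps $\Dgm_{\mb P} \to \mb C$. The key point is that the definition of $\nabla_{\Gamma,\K} f \leq g$ (pointwise, using the preorder on $\mb C$) becomes \emph{harder} to satisfy when the preorder is finer: by the first part, $\nabla_{\Gamma,\K}\mathcal F(a,b) \leq \nabla_{\Gamma,\K}\mathcal G$ implies $\nabla_{\Gamma,\K}\mathcal F(a,b) \preceq \nabla_{\Gamma,\K}\mathcal G$ wait---more precisely, for functions $f,g$, the pointwise relation $\nabla_{\Gamma,\K}f(\mb x) \leq g(\mb x)$ implies $\nabla_{\Gamma,\K}f(\mb x) \preceq g(\mb x)$, so the set of $\epsilon$ (resp. pairs $\Gamma,\K$) witnessing $\dist_E^{\leq}$ is contained in the set witnessing $\dist_E^{\preceq}$. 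Taking infima over a smaller set yields a larger (or equal) value, so $\dist_E^{\preceq}(f,g) \leq \dist_E^{\leq}(f,g)$ for all $f,g \colon \Dgm_{\mb P} \to \mb C$, and in particular for the rank invariants $\mathcal F, \mathcal G$ of persistence modules. This is exactly the claimed maximality of $\dist_E^{\leq}$.

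I do not expect a serious obstacle here; the argument is essentially a bookkeeping exercise built on Proposition \ref{thm: min preorder}. The one step requiring a little care is confirming that $\leq$ (the transitive closure of $R$) genuinely respects mono- and epimorphisms, so that the erosion distance with respect to it is among the family of distances over which we are claiming maximality --- otherwise the statement would be vacuous or ill-typed. This is handled by the $\id$-trick above. A secondary subtlety, worth a remark rather than an argument, is that minimality of $\leq$ on $\mb C$ is what drives maximality of the induced distance: a coarser target preorder makes more inequalities $\nabla_{\Gamma,\K}\mathcal F \leq \mathcal G$ true, hence makes it easier to be close, hence yields a smaller distance --- so the \emph{smallest} preorder produces the \emph{largest} distance. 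Everything else is immediate from the definitions of $\dist_E^{\Omega}$ and $\dist_E^{\omega}$ and from monotonicity of infima under set inclusion.
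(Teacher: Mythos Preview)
Your proposal is correct and is exactly the argument the paper has in mind: the corollary is stated without proof in the paper, as it follows immediately from Proposition~\ref{thm: min preorder} by the transitive-closure and infimum-monotonicity reasoning you give. Your extra verification that $\leq$ itself respects mono- and epimorphisms (via the identity-map diagrams) is a worthwhile sanity check that the paper leaves implicit.
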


To define the erosion distance between continuous functions $\varphi \colon X \to \Rset^n$, where we allow the space $X$ to vary, we first take the sublevelset filtration of $X$ induced by $\varphi$ which gives us a functor $\Rset^n \to \Top$, then apply singular homology which gives us a functor $\Rset^n \to \Ab$. Now we can apply the erosion distance $\dist_E^{\Omega}$ for functors $\Rset^n \to \Ab$ as defined in Definition \ref{def: persmod erosion dist}.

With more detail: we take continuous functions $\varphi \colon X \to \Rset^n$, $\psi \colon Y \to \Rset^n$ where $X$ and $Y$ are topological spaces. For all $\mb a < \mb b \in \Rset^n$, we set
\[H_k^{X,\varphi}(\mb a,\mb b) = \im H_k\big(X\langle \varphi \leq \mb a \rangle \subseteq X\langle \varphi \leq \mb b \rangle\big),\]
where $X\langle \varphi \leq \mb c \rangle := \{x \in X \mid \varphi(x) \leq \mb c\}$ for all $\mb c \in \Rset^n$, and $H_k$ is the $k$-th singular homology with coefficients in an Abelian group $A$. Similarly, we set
\[H_k^{Y,\psi}(\mb a,\mb b) = \im H_k\big(Y\langle \psi \leq \mb a \rangle \subseteq Y\langle \psi \leq \mb b \rangle\big).\]
Note that $H_k^{X,\varphi}$ and $H_k^{Y,\psi}$ are the rank invariants of the sublevel set persistent homologies of $\varphi$ and $\psi$, and especially they are maps $\Dgm_{\Rset^n} \to \Ab$.

\begin{defn}
The erosion distance between $\varphi \colon X \to \Rset^n$ and $\psi \colon Y \to \Rset^n$ is
\[\dist_E^{\Omega}(\varphi,\psi) = \dist_E^{\Omega}(H_k^{X,\varphi},H_k^{Y,\psi}).\]
\end{defn}

The distance $\dist_T$ is defined similarly with a subtle difference: set $\Dgm'_{\Rset^n} := \{(\mb a,\mb b) \in \Rset^n \mid a_i < b_i \text{ for each } i = 1, \dots, n\} \subseteq \Dgm_{\Rset^n}$ and define functions
\[\hat \varphi \colon \Dgm'_{\Rset^n} \to \Ab,\ \hat \varphi (\mb a,\mb b) = H_k^{X,\varphi}(\mb a,\mb b),\]
and
\[\hat \psi \colon \Dgm'_{\Rset^n} \to \Ab,\ \hat \psi (\mb a,\mb b) = H_k^{Y,\psi}(\mb a,\mb b),\]
i.e.~$\hat \varphi$ and $\hat \psi$ are restrictions of $H_k^{X,\varphi}$ and $H_k^{Y,\psi}$ to $\Dgm'_{\Rset^n}$.
\begin{defn}
The distance $\dist_T$ between $\varphi$ and $\psi$ is
\[\dist_T(\varphi,\psi) = \inf \big(\{\epsilon \in [0,\infty) \mid \nabla_{\Omega_\epsilon} \hat \varphi \leq \hat \psi \text{ and } \nabla_{\Omega_\epsilon} \hat \psi \leq \hat \varphi\} \cup \{\infty\} \big),\]
where $\nabla_{\Omega_\epsilon} \hat \varphi$ and $\nabla_{\Omega_\epsilon} \hat \psi$ are restricted to $\Dgm'_{\Rset^n}$.
\end{defn}

The only difference between $\dist_T$ and $\dist_E^{\Omega}$ is that the inequalities considered in the definition of $\dist_E^{\Omega}$ are of functions defined over $\Dgm_{\Rset^n}$ while in the definition of $\dist_T$ the inequalities are of \emph{the same functions} restricted to $\Dgm'_{\Rset^n} \subset \Dgm_{\Rset^n}$. Hence,
\[\dist_T(\varphi,\psi) \leq \dist_E^{\Omega}(\varphi,\psi).\]
The converse inequality actually holds as well, as the next proposition implies.

\begin{prop}
Let $f,g \colon \Dgm_{\Rset^n} \to \mb G$ be decreasing functions and $\epsilon > 0$. If $\nabla_{\Omega_\epsilon}f |_{\Dgm'_{\Rset^n}} \leq g |_{\Dgm'_{\Rset^n}}$, then $\nabla_{\Omega_{\epsilon'}} f \leq g$ for all $\epsilon' > \epsilon$.
\end{prop}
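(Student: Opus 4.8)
The plan is to bridge the gap between $\Dgm'_{\Rset^n}$ and all of $\Dgm_{\Rset^n}$ by a monotonicity argument: any point $(\mx a, \mx b) \in \Dgm_{\Rset^n}$ with $\mx a < \mx b$ but $a_i = b_i$ for some coordinates can be approximated from "inside" $\Dgm'_{\Rset^n}$ after an arbitrarily small extra erosion, and then decreasingness of $f$ and $g$ lets us compare values. Concretely, fix $(\mx a, \mx b) \in \Dgm_{\Rset^n}$ and $\epsilon' > \epsilon$; I want to show $\nabla_{\Omega_{\epsilon'}} f(\mx a, \mx b) \leq g(\mx a, \mx b)$, i.e.\ $f(\mx a - \bm{\epsilon'}, \mx b + \bm{\epsilon'}) \leq g(\mx a, \mx b)$.

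First I would pick $\eta \in (0, \epsilon' - \epsilon]$ small enough and set $\mx a' = \mx a - \bm{\epsilon' - \epsilon - \eta}$ shifted appropriately so that $\mx a' < \mx b$ \emph{strictly in every coordinate}; since $a_i \le b_i$ always and we are subtracting a positive vector from $\mx a$, choosing $\eta$ (or rather the shift amount) small still keeps $\mx a' < \mx b + \bm{\eta}$ coordinatewise strictly, so $(\mx a', \mx b + \bm{\eta}) \in \Dgm'_{\Rset^n}$. Then apply the hypothesis at this point: $\nabla_{\Omega_\epsilon} f(\mx a', \mx b + \bm{\eta}) \leq g(\mx a', \mx b + \bm{\eta})$, that is $f(\mx a' - \bm\epsilon, \mx b + \bm{\eta} + \bm\epsilon) \leq g(\mx a', \mx b + \bm\eta)$.

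The second step is to wrap this inequality between $\nabla_{\Omega_{\epsilon'}} f(\mx a,\mx b)$ on the left and $g(\mx a, \mx b)$ on the right using that both functions are decreasing, i.e.\ that $f(\mx x, \mx y) \le f(\mx x', \mx y')$ whenever $\mx x \ge \mx x'$ and $\mx y \le \mx y'$. On the $f$ side I need $\mx a' - \bm\epsilon \ge \mx a - \bm{\epsilon'}$ and $\mx b + \bm\eta + \bm\epsilon \le \mx b + \bm{\epsilon'}$; with the correct bookkeeping both reduce to $\eta \le \epsilon' - \epsilon$ (and to the shift in $\mx a'$ being at most $\epsilon' - \epsilon$), so $\nabla_{\Omega_{\epsilon'}} f(\mx a, \mx b) = f(\mx a - \bm{\epsilon'}, \mx b + \bm{\epsilon'}) \le f(\mx a' - \bm\epsilon, \mx b + \bm\eta + \bm\epsilon)$. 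On the $g$ side I need $\mx a' \le \mx a$ and $\mx b + \bm\eta \ge \mx b$, both immediate, giving $g(\mx a', \mx b + \bm\eta) \le g(\mx a, \mx b)$. Chaining the three inequalities yields $\nabla_{\Omega_{\epsilon'}} f(\mx a, \mx b) \le g(\mx a, \mx b)$, and since $(\mx a, \mx b)$ was arbitrary this is exactly $\nabla_{\Omega_{\epsilon'}} f \le g$.

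The main obstacle is purely a bookkeeping one: choosing the perturbation so that the auxiliary point genuinely lands in $\Dgm'_{\Rset^n}$ (strict inequality in \emph{every} coordinate, including those where originally $a_i = b_i$) while simultaneously keeping all the shifts bounded by $\epsilon' - \epsilon$ so the two decreasingness comparisons go through. One clean way to package this: let $\rho = \tfrac{1}{2}(\epsilon' - \epsilon) > 0$, take $\mx a' = \mx a - \bm\rho$ and work with $\mx b + \bm\rho$; then $(\mx a', \mx b + \bm\rho) \in \Dgm'_{\Rset^n}$ because $a_i - \rho < b_i + \rho$ always, the hypothesis gives $f(\mx a - \bm\rho - \bm\epsilon, \mx b + \bm\rho + \bm\epsilon) \le g(\mx a - \bm\rho, \mx b + \bm\rho)$, and since $\rho + \epsilon \le \epsilon'$ decreasingness of $f$ and $g$ closes the argument. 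No step requires anything beyond Proposition \ref{thm: decreasing}-style monotonicity and the definition of $\nabla$.
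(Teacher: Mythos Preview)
Your proposal is correct and follows essentially the same route as the paper: perturb the given point $(\mx a,\mx b)$ into $\Dgm'_{\Rset^n}$, apply the hypothesis there, and use monotonicity of $f$ and $g$ to close the chain. The paper's proof makes the slightly cleaner choice of shifting by the full amount $\bm{\epsilon'}-\bm{\epsilon}$ rather than your $\rho = \tfrac{1}{2}(\epsilon'-\epsilon)$, so that $f(\mx a - \bm{\epsilon'},\mx b+\bm{\epsilon'})$ appears on the nose and only decreasingness of $g$ is invoked; your half-shift works too but needs both $f$ and $g$ decreasing.
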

\begin{proof}
Let $\epsilon' > \epsilon$ and take any $(\mb a, \mb b) \in \Dgm_{\Rset^n}$. Note that since $\epsilon' - \epsilon > 0$, we have
\[(\mb a - (\bm \epsilon' - \bm \epsilon), \mb b + (\bm \epsilon' - \bm \epsilon)) \in \Dgm'_{\Rset^n}.\]
Now, using the inequality $\nabla_{\Omega_\epsilon}f |_{\Dgm'_{\Rset^n}} \leq g |_{\Dgm'_{\Rset^n}}$ and the fact that $g$ is decreasing,
\begin{align*}
\nabla_{\Omega_{\epsilon'}} f (\mb a,\mb b) &= f(\mb a- \bm \epsilon', \mb b + \bm \epsilon')
\\&= \nabla_{\Omega_\epsilon} f(\mb a - (\bm \epsilon' - \bm \epsilon), \mb b + (\bm \epsilon' - \bm \epsilon))
\\&\leq g(\mb a - (\bm \epsilon' - \bm \epsilon), \mb b + (\bm \epsilon' - \bm \epsilon))
\\&\leq g(\mb a,\mb b).
\end{align*}
Hence $\nabla_{\Omega_{\epsilon'}} f \leq g$.
\end{proof}

As noted before the previous proposition, we now see that
\begin{equation}\label{eq: d_T = d_E}
\dist_T(\varphi,\psi) = \dist_E^{\Omega}(\varphi,\psi).
\end{equation}

One of the central results of \cite{frosini13} is Theorem 2.9 that states that $\dist_T$ gives a lower bound for the natural pseudo-distance. The natural pseudo-distance is a dissimilarity measure between size pairs, i.e.~topological spaces equipped with continuous $\Rset^n$-valued functions. It measures how close we can get two functions corresponding to two size pairs, with respect to the $L^\infty$-distance, by changing the base space of one of the functions to the base space of the other function by a homeomorphism.

\begin{defn}
A \emph{size pair} $(X, \varphi)$ consists of a topological space $X$ and a continuous function $\varphi \to \Rset^n$. \emph{The natural pseudo-distance} between two size pairs $(X, \varphi)$, $(Y,\psi)$ is
\[\dist_{\mathit{NP}}(\varphi,\psi) = \inf_{h \in \Homeo(X,Y)} \norm{\varphi - \psi \circ h}_\infty,\]
where $\Homeo(X,Y)$ is the set of homoeomorphisms from $X$ to $Y$, $\Rset^n$ is equipped with the max-norm $\norm{\mb x} = \max_{i=1,\dots,n} \abs{x_i}$, and $\norm{f}_\infty = \sup_{x \in X}\norm{f(x)}$ is the sup-norm.
\end{defn}

For more on the natural pseudo-distance, see e.g. \cite{donatini04}, \cite{donatini07}, \cite{donatini09}, and to see how the natural pseudo-distance can be interpreted as an interleaving distance, see \cite[Section~3.3]{desilva17}.

\begin{thm}[Lower bound for the natural pseudo-distance]\label{thm: lower bound nat pseudodist}
Let $X$ and $Y$ be homeomorphic topological spaces equipped with continuous maps $\varphi \colon X \to \Rset^n$ and $\psi \colon Y \to \Rset^n$. Let $H \colon \Top \to \mb C$ be a functor. The functions $\varphi$ and $\psi$ induce functors
\[\varphi^\leq, \psi^\leq \colon \Rset^n \to \Top\]
by taking sublevel sets. Concretely,
\[\varphi^\leq (\mb a) = X\langle \varphi \leq \mb a \rangle = \{x \in X \mid \varphi(x) \leq \mb a\},\]
\[\psi^\leq (\mb a) = Y\langle \psi \leq \mb a \rangle = \{y \in Y \mid \psi(y) \leq \mb a\},\]
for all $\mb a \in \Rset^n$, and morphisms $\varphi^\leq (\mb a \leq \mb b)$ and $\psi^\leq (\mb a \leq \mb b)$ are simply inclusions for all $\mb a \leq \mb b \in \Rset^n$. Remember that $\mb C$ is equipped with a preorder that respects mono- and epimorphisms. Now
\[\dist_E^{\Omega}(H\varphi^\leq,H\psi^\leq) \leq \dist_{\mathit{NP}}(\varphi,\psi).\]
\end{thm}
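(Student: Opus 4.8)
The plan is to route the estimate through the interleaving distance. By the stability theorem (Theorem~\ref{thm: stability of erosion wrt interleaving}) we have $\dist_E^{\Omega}(H\varphi^\leq, H\psi^\leq) \leq \dist_I^{\Omega}(H\varphi^\leq, H\psi^\leq)$, so it suffices to show $\dist_I^{\Omega}(H\varphi^\leq, H\psi^\leq) \leq \dist_{\mathit{NP}}(\varphi,\psi)$. Since $X$ and $Y$ are homeomorphic, the right-hand side is an infimum over a non-empty set of homeomorphisms; if it equals $\infty$ there is nothing to prove, so I would fix a homeomorphism $h \colon X \to Y$, put $\epsilon = \norm{\varphi - \psi\circ h}_\infty < \infty$, and show that $\varphi^\leq$ and $\psi^\leq$ are $(\Omega_\epsilon,\Omega_\epsilon)$-interleaved as functors $\Rset^n \to \Top$. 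Applying $H$ then yields an $\epsilon$-interleaving of $H\varphi^\leq$ and $H\psi^\leq$ with respect to $\Omega$: a functor carries natural transformations to natural transformations, $H(\psi^\leq\Omega_\epsilon) = (H\psi^\leq)\Omega_\epsilon$ and $H(\varphi^\leq\Omega_\epsilon) = (H\varphi^\leq)\Omega_\epsilon$, and $H$ preserves the commutativity of the two interleaving triangles. Hence $\dist_I^{\Omega}(H\varphi^\leq, H\psi^\leq) \leq \epsilon$, and taking the infimum over $h$ gives the claim.

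To build the interleaving in $\Top$, note that $\norm{\varphi - \psi\circ h}_\infty \leq \epsilon$ means $\abs{\varphi_i(x) - \psi_i(h(x))} \leq \epsilon$ for every $x \in X$ and every coordinate $i$, so $\varphi(x) \leq \mb a$ forces $\psi(h(x)) \leq \mb a + \bm\epsilon = \Omega_\epsilon \mb a$. Thus $h$ restricts to a continuous map $\eta_{\mb a} \colon X\langle\varphi \leq \mb a\rangle \to Y\langle\psi \leq \Omega_\epsilon\mb a\rangle$, and symmetrically $h^{-1}$ restricts to a continuous map $\theta_{\mb a} \colon Y\langle\psi \leq \mb a\rangle \to X\langle\varphi \leq \Omega_\epsilon\mb a\rangle$. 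I would then check that $(\eta_{\mb a})_{\mb a}$ and $(\theta_{\mb a})_{\mb a}$ assemble into natural transformations $\eta \colon \varphi^\leq \To \psi^\leq\Omega_\epsilon$ and $\theta \colon \psi^\leq \To \varphi^\leq\Omega_\epsilon$: for $\mb a \leq \mb b$ the relevant naturality square has inclusions (the structure morphisms $\varphi^\leq(\mb a \leq \mb b)$ and $\psi^\leq(\Omega_\epsilon\mb a \leq \Omega_\epsilon\mb b)$) as its vertical legs, and both ways around the square send $x \in X\langle\varphi\leq\mb a\rangle$ to $h(x)$ viewed inside $Y\langle\psi\leq\Omega_\epsilon\mb b\rangle$, so it commutes on the nose; the argument for $\theta$ is identical with $h^{-1}$.

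Finally I would verify the two interleaving triangles. The composite $\theta_{\Omega_\epsilon\mb a}\circ\eta_{\mb a} \colon X\langle\varphi\leq\mb a\rangle \to X\langle\varphi\leq\Omega_\epsilon\Omega_\epsilon\mb a\rangle$ is the restriction of $h^{-1}\circ h = \id_X$, hence it is the inclusion, i.e.~the structure morphism $\varphi^\leq(\mb a \leq \Omega_\epsilon\Omega_\epsilon\mb a)$, which is exactly the arrow $\varphi^\leq \To \varphi^\leq\Omega_\epsilon\Omega_\epsilon$ appearing in the first interleaving diagram; likewise $\eta_{\Omega_\epsilon\mb a}\circ\theta_{\mb a}$ is the restriction of $h\circ h^{-1} = \id_Y$, giving the second diagram. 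This establishes the $(\Omega_\epsilon,\Omega_\epsilon)$-interleaving and completes the proof. The only step requiring any care is the naturality bookkeeping in the middle paragraph; I do not expect a genuine obstacle, since the whole construction just transports the homeomorphism $h$ to a map of sublevel-set diagrams and the degenerate case $\epsilon = 0$ (where $\Omega_0 = I$) needs no separate treatment.
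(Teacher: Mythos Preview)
Your argument is correct, but it takes a different route from the paper's main proof. The paper works \emph{directly} with the erosion distance: given a homeomorphism $h$ with $\epsilon = \norm{\varphi - \psi\circ h}_\infty$, it observes (exactly as you do) that $h$ and $h^{-1}$ restrict to maps between sublevel sets, obtains for each $(\mb a,\mb b)\in\Dgm_{\Rset^n}$ a commuting square
\[
\begin{tikzcd}
H(X\langle\varphi\leq\mb a-\bm\epsilon\rangle) \dar{H(h)} \rar & H(X\langle\varphi\leq\mb b+\bm\epsilon\rangle) \\
H(Y\langle\psi\leq\mb a\rangle) \rar & H(Y\langle\psi\leq\mb b\rangle) \uar{H(h^{-1})}
\end{tikzcd}
\]
and then applies Lemma~\ref{lemma: im order} to conclude $\nabla_{\Omega_\epsilon}H\varphi^\leq(\mb a,\mb b)\leq H\psi^\leq(\mb a,\mb b)$ immediately. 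No interleaving is assembled and Theorem~\ref{thm: stability of erosion wrt interleaving} is not invoked.

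Your approach instead packages the restricted maps as natural transformations, checks the interleaving triangles, and then routes through the stability theorem. This is precisely the alternative the paper sketches in the Remark following the theorem. What it buys you is the stronger intermediate statement $\dist_I^\Omega(H\varphi^\leq,H\psi^\leq)\leq\dist_{\mathit{NP}}(\varphi,\psi)$, and a cleaner modular structure; what the paper's direct proof buys is that it never needs naturality or the interleaving triangles, only the single factorisation $F(\Gamma^{-1}a<\K b)=\psi_b\circ G(a<b)\circ\varphi_{\Gamma^{-1}a}$ hidden inside Lemma~\ref{lemma: im order}.
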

\begin{proof}
To simplify notation, let's denote the rank invariants of $H\varphi^\leq$ and $H\psi^\leq$ simply by $H\varphi^\leq$ and $H\psi^\leq$.

Let $h \colon X \to Y$ be a homeomorphism and set $\epsilon = \norm{\varphi - \psi \circ h}_\infty$. We first note that $\norm{\varphi \circ h^{-1} - \psi}_\infty = \epsilon$. We need to show that $\nabla_{\Omega_\epsilon} H\varphi^\leq \leq H\psi^\leq$ and $\nabla_{\Omega_\epsilon} H\psi^\leq \leq H\varphi^\leq$. We'll show only the former inequality since the latter can be shown in exactly the same way. Let $(\mb a,\mb b) \in \Dgm_{\Rset^n}$. Note that $h$ and $h^{-1}$ can be restricted to maps
\[X\langle \varphi \leq \mb a-\bm \epsilon \rangle \xrightarrow{h} Y\langle \psi \leq \mb a \rangle\]
and
\[Y\langle \psi \leq \mb b \rangle \xrightarrow{h^{-1}} X\langle \varphi \leq \mb b + \bm \epsilon \rangle,\]
since
\begin{align*}
\varphi(x) \leq \mb a - \bm \epsilon &\Rightarrow \varphi_i(x) \leq a_i - \epsilon \ \forall i = 1, \dots, n
\\&\Rightarrow \psi_i(h(x)) = \psi_i(h(x)) - \varphi_i(x) + \varphi_i(x) \leq \epsilon + a_i - \epsilon = a_i\ \forall i
\\&\Rightarrow \psi(h(x)) \leq \mb a
\end{align*}
and similarly $\psi(x) \leq \mb b \Rightarrow \varphi(h^{-1}(x)) \leq \mb b + \bm \epsilon$. Then, note that the composition of maps
\[X\langle \varphi \leq \mb a - \bm \epsilon \rangle \xrightarrow{h} Y\langle \psi \leq \mb a \rangle \subseteq Y\langle \psi \leq \mb b \rangle \xrightarrow{h^{-1}} X\langle \varphi \leq \mb b + \bm \epsilon \rangle\]
is simply the inclusion
\[X\langle \varphi \leq \mb a - \bm \epsilon \rangle \subseteq X\langle \varphi \leq \mb b + \bm \epsilon \rangle.\]
Since $H$ is a functor, we get a commutative diagram
\[\begin{tikzcd}
H(X\langle \varphi \leq \mb a - \bm \epsilon \rangle) \dar{H(h)} \rar{H} & H(X\langle \varphi \leq \mb b + \bm \epsilon \rangle) \\
H(Y\langle \psi \leq \mb a \rangle) \rar{H} & H(Y\langle \psi \leq \mb b \rangle) \uar{H(h^{-1})}
\end{tikzcd}\]
The image of the upper horizontal map is $\nabla_{\Omega_\epsilon} H\varphi^\leq(\mb a,\mb b) = H\varphi^\leq(\mb a - \bm \epsilon, \mb b + \bm \epsilon)$ and the image of the lower horizontal map is $H\psi^\leq(\mb a,\mb b)$. By Lemma \ref{lemma: im order}
$\nabla_{\Omega_\epsilon} H\varphi^\leq(\mb a,\mb b) \leq H\psi^\leq(\mb a,\mb b)$ and further $\nabla_{\Omega_\epsilon} H\varphi^\leq \leq H\psi^\leq$.
\end{proof}

\begin{rem}
Note that the previous proof can be modified to give a proof for the fact that the interleaving distance gives a lower bound for the natural pseudodistance, i.e.
\[\dist_I^{\Omega}(H\varphi^\leq,H\psi^\leq) \leq \dist_{\mathit{NP}}(\varphi,\psi).\]
This is done by noting that the last commutative diagram shows that $H(h)$ and $H(h^{-1})$ give an $\Omega_\epsilon$-interleaving between the functors $H\varphi^\leq$ and $H\psi^\leq$. Then, since we already showed that the erosion distance is smaller than the interleaving distance (Theorem \ref{thm: stability of erosion wrt interleaving}) we get the theorem.

A third way to prove the theorem is to use the fact the persistent sublevel set homology of a size pair remains invariant when we change the base space with a homeomorphism (see e.g. \cite[Appendix A]{frosini16}). This fact, combined with the classical stability theorem of persistent homology, shows that the interleaving distance gives a lower bound for the natural pseudodistance. Then, we can again use Theorem \ref{thm: stability of erosion wrt interleaving} to show the previous theorem.
\end{rem}

\begin{cor}[{\cite[Theorem 2.9]{frosini13}}]
Let $(X,\varphi)$ and $(Y,\psi)$ be as in the previous theorem. Then
\[\dist_T(\varphi,\psi) = \dist_E^{\Omega}(\varphi,\psi) \leq \dist_{\mathit{NP}}(\varphi,\psi).\]
\end{cor}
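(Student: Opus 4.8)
The plan is to read off the corollary as an immediate consequence of the two results already in place: the identity $\dist_T(\varphi,\psi) = \dist_E^{\Omega}(\varphi,\psi)$ recorded in equation~(\ref{eq: d_T = d_E}), and Theorem~\ref{thm: lower bound nat pseudodist} specialized to the appropriate functor. So the first thing I would do is fix the data: take $\mb C = \Ab$, equipped with the preorder $A \leq B \iff \exists\, B' \subseteq B \text{ and an epimorphism } B' \twoheadrightarrow A$ introduced earlier, which was already observed to respect mono- and epimorphisms, and take $H = H_k$, the $k$-th singular homology functor $\Top \to \Ab$ with coefficients in the fixed Abelian group $A$.

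Next I would check that the erosion distance $\dist_E^{\Omega}(\varphi,\psi)$ as defined for size pairs coincides with $\dist_E^{\Omega}(H\varphi^{\leq}, H\psi^{\leq})$ for this choice of $H$. This is just unwinding definitions: $H_k \varphi^{\leq}$ is the persistence module $\mb a \mapsto H_k(X\langle \varphi \leq \mb a\rangle)$, whose rank invariant sends $(\mb a,\mb b)$ to $\im H_k(X\langle \varphi \leq \mb a\rangle \subseteq X\langle \varphi \leq \mb b\rangle) = H_k^{X,\varphi}(\mb a,\mb b)$, and similarly $H_k \psi^{\leq}$ has rank invariant $H_k^{Y,\psi}$. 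Hence $\dist_E^{\Omega}(H\varphi^{\leq}, H\psi^{\leq}) = \dist_E^{\Omega}(H_k^{X,\varphi}, H_k^{Y,\psi}) = \dist_E^{\Omega}(\varphi,\psi)$ by the defining equation of the latter.

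Then I would simply invoke Theorem~\ref{thm: lower bound nat pseudodist}, whose hypotheses are met ($X$ and $Y$ are homeomorphic, $\varphi,\psi$ continuous, $\mb C = \Ab$ carries a preorder respecting mono- and epimorphisms, $H = H_k$ a functor $\Top \to \mb C$), to conclude $\dist_E^{\Omega}(H\varphi^{\leq}, H\psi^{\leq}) \leq \dist_{\mathit{NP}}(\varphi,\psi)$. Combining this with the identification of the previous paragraph and with~(\ref{eq: d_T = d_E}) yields the chain $\dist_T(\varphi,\psi) = \dist_E^{\Omega}(\varphi,\psi) \leq \dist_{\mathit{NP}}(\varphi,\psi)$.

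There is no genuine obstacle here — the content is entirely in Theorem~\ref{thm: lower bound nat pseudodist} and in equation~(\ref{eq: d_T = d_E}), and the corollary is pure bookkeeping. The only point requiring a moment's care is the notational identification in the middle step, namely that the two a priori different-looking definitions of the erosion distance between size pairs (one via $H_k^{X,\varphi}, H_k^{Y,\psi}$, the other via $H\varphi^{\leq}, H\psi^{\leq}$ with $H = H_k$) refer to the same object; once that is spelled out, the statement follows by transitivity of $=$ and $\leq$.
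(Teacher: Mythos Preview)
Your proposal is correct and follows exactly the paper's approach: the paper's proof simply cites equation~(\ref{eq: d_T = d_E}) for the equality and Theorem~\ref{thm: lower bound nat pseudodist} for the inequality. Your additional paragraph spelling out the identification $\dist_E^{\Omega}(\varphi,\psi) = \dist_E^{\Omega}(H\varphi^{\leq}, H\psi^{\leq})$ for $H = H_k$ is a helpful clarification that the paper leaves implicit.
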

\begin{proof}
The equality $\dist_T(\varphi,\psi) = \dist_E^{\Omega}(\varphi,\psi)$ is precisely equation \ref{eq: d_T = d_E}, and the inequality is the conclusion of Theorem \ref{thm: lower bound nat pseudodist}.
\end{proof}

\section{Adjunction relation}\label{sec: Adjunction relation}

If we have a superlinear family and a sublinear projection for $\mb P$, a natural question is, when are the two erosion distances equal. In \cite{bubenik15} it was shown that if the family and the projection satisfy the so-called adjunction relation, then the two interleaving distances are equal. The same argument can be applied to show the equality of the two interleaving distances in our case where the set of translations is smaller. In this section we show that the same conclusion holds for the two erosion distances.

Remember that $\mb P$ is a preordered set, $\mb G$ is a preordered class, and $\mb C$ is an Abelian category equipped with a preorder that respects mono- and epimorphisms.

\begin{defn}[\cite{bubenik15}]
Let $\Omega \colon [0,\infty) \to \Trans P$ be a superlinear family and $\omega \colon \Trans P \to [0,\infty]$ be a sublinear projection. We say that $\omega$ and $\Omega$ satisfy the \emph{adjuction relation}, if for all $\epsilon \in [0, \infty)$ and $\Gamma \in \Trans P$
\[\omega_\Gamma \leq \epsilon \iff \Gamma \leq \Omega_\epsilon.\]
We also say that $\omega$ and $\Omega$ are an \emph{adjoint pair}. We denote this relation by $\omega \dashv \Omega$.
\end{defn}

Note that since all the categories in the definition are thin, the relation is almost precisely the adjunction relation of functors, with the only difference being that the domain of $\Omega$ is not equal to the codomain of $\omega$. If $\Trans P$ has a maximum, i.e.~a translation that is larger than every other translation, then we can extend the domain of $\Omega$ to $[0,\infty]$ and we'll get an adjoint pair of functors.

Before showing that the erosion distances of an adjoint pair $\omega \dashv \Omega$ are equal, we'll give a description of how $\Omega$ (resp. $\omega$) determines $\omega$ (resp. $\Omega$).

\begin{thm}[{\cite[2.19]{puuska16}}]\footnote{Note that the same proof shows the theorem in the setting of \cite{bubenik15} where the set of translations is larger.}
\ 
\begin{enumerate}[\itshape i)]
\item Let $\Omega \colon [0,\infty) \to \Trans P$ be a superlinear family. There exists a sublinear projection $\omega \colon \Trans P \to [0,\infty]$ such that $\omega \dashv \Omega$, if and only if for all $\Gamma \in \Trans P$ the set
\[\{\epsilon \in [0,\infty) \mid \Gamma \leq \Omega_\epsilon\} \cup \{ \infty \}\]
has a minimum. If $\omega$ exists, then $\omega_\Gamma$ is the minimum of the above set for every $\Gamma \in \Trans P$.

\item Let $\omega \colon \Trans P \to [0,\infty]$ be a sublinear projection. There exists a superlinear family $\Omega \colon [0,\infty) \to \Trans P$ such that $\omega \dashv \Omega$, if and only if for all $\epsilon \in [0, \infty)$ the set
\[\{\Gamma \in \Trans P \mid \omega_\Gamma \leq \epsilon\}\]
has a maximum.\footnote{By a maximum we mean an element of the set that is larger than every other element of the set. Since $\mb P$ is only preordered, $\Trans P$ is also only preordered, and so the maximums might not be unique.} If $\Omega$ exists, then $\Omega_\epsilon$ is one of the possibly many maximums of the above set for every $\epsilon \in [0,\infty)$.
\end{enumerate}
\end{thm}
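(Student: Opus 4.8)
The approach is to read the adjunction relation $\omega \dashv \Omega$ as a posetal Galois connection between the thin categories $\Trans P$ and $[0,\infty]$, and to recover each half of the pair from the other by the standard extremal formula, observing that the required extremum exists precisely under the stated hypothesis. Throughout I will use the elementary facts that composition of translations is monotone in each argument (immediate from the definition of a translation), that $I \leq \Gamma$ for every $\Gamma \in \Trans P$, and that a superlinear family is increasing (noted after Definition \ref{def: sub- superlin}).

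For part \emph{i)}, the \emph{only if} direction is a rewriting: if $\omega \dashv \Omega$, then for a fixed $\Gamma$ the relation turns $\{\epsilon \in [0,\infty) \mid \Gamma \leq \Omega_\epsilon\}$ into $\{\epsilon \in [0,\infty) \mid \omega_\Gamma \leq \epsilon\}$, which equals $[\omega_\Gamma,\infty)$ when $\omega_\Gamma < \infty$ and is empty when $\omega_\Gamma = \infty$; in either case the set with $\{\infty\}$ adjoined has minimum $\omega_\Gamma$, which also pins $\omega$ down uniquely. For the \emph{if} direction I would define $\omega_\Gamma$ to be this minimum and verify the three axioms of a sublinear projection: it is increasing because $\Gamma \leq \K$ makes the defining set for $\K$ a subset of that for $\Gamma$; $\omega_I = 0$ because $I \leq \Omega_0$ puts $0$ into the set; and $\omega_{\Gamma\K} \leq \omega_\Gamma + \omega_\K$ because, when both are finite, minimality gives $\Gamma \leq \Omega_{\omega_\Gamma}$ and $\K \leq \Omega_{\omega_\K}$, whence $\Gamma\K \leq \Omega_{\omega_\Gamma}\Omega_{\omega_\K} \leq \Omega_{\omega_\Gamma + \omega_\K}$ by monotonicity of composition and superlinearity, so $\omega_\Gamma + \omega_\K$ lies in the set defining $\omega_{\Gamma\K}$. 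Finally the adjunction relation: $\Gamma \leq \Omega_\epsilon$ with $\epsilon$ finite puts $\epsilon$ in the set, hence $\omega_\Gamma \leq \epsilon$; conversely $\omega_\Gamma \leq \epsilon < \infty$ forces $\omega_\Gamma$ to be a finite element of the set, so $\Gamma \leq \Omega_{\omega_\Gamma} \leq \Omega_\epsilon$ by monotonicity of $\Omega$.

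For part \emph{ii)}, the \emph{only if} direction: assuming $\omega \dashv \Omega$ and fixing $\epsilon$, reflexivity $\Omega_\epsilon \leq \Omega_\epsilon$ together with the adjunction gives $\omega_{\Omega_\epsilon} \leq \epsilon$, so $\Omega_\epsilon$ belongs to $\{\Gamma \in \Trans P \mid \omega_\Gamma \leq \epsilon\}$, and every other member $\Gamma$ of that set satisfies $\Gamma \leq \Omega_\epsilon$ by the adjunction, so $\Omega_\epsilon$ is a maximum. For the \emph{if} direction I would choose, for each $\epsilon$, a maximum $\Omega_\epsilon$ of the (nonempty, since it contains $I$) set $\{\Gamma \in \Trans P \mid \omega_\Gamma \leq \epsilon\}$; then each $\Omega_\epsilon \in \Trans P$ and $\omega_{\Omega_\epsilon} \leq \epsilon$, superlinearity holds because subadditivity of $\omega$ gives $\omega_{\Omega_\epsilon\Omega_{\epsilon'}} \leq \omega_{\Omega_\epsilon} + \omega_{\Omega_{\epsilon'}} \leq \epsilon + \epsilon'$, so $\Omega_\epsilon\Omega_{\epsilon'}$ lies in the set whose maximum is $\Omega_{\epsilon+\epsilon'}$, and the adjunction relation is then immediate: $\omega_\Gamma \leq \epsilon$ puts $\Gamma$ in the set so $\Gamma \leq \Omega_\epsilon$, while $\Gamma \leq \Omega_\epsilon$ gives $\omega_\Gamma \leq \omega_{\Omega_\epsilon} \leq \epsilon$ since $\omega$ is increasing.

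I do not anticipate a genuine obstacle here: this is the posetal adjoint functor theorem dressed up with the monoid structures. The only places to be careful are bookkeeping ones — keeping the exceptional value $\infty$ in part \emph{i)} separate from finite values, so that $\Omega_{\omega_\Gamma}$ is only invoked when $\omega_\Gamma \in [0,\infty)$, matching that $\Omega$ is only defined on $[0,\infty)$, and recording the one-line monotonicity of composition in $\Trans P$. It is also worth noting, as the footnote claims, that every step uses only monotonicity, the defining superlinear/sublinear inequalities, and $I \leq \Gamma$, so nothing changes if $\Trans P$ is enlarged to the monoid of endofunctors with a natural transformation from the identity, as in \cite{bubenik15}.
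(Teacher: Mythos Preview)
Your proposal is correct and follows essentially the same route as the paper's proof: both recognize the adjunction as a posetal Galois connection and verify the extremal formulas directly, checking $\omega_I=0$, monotonicity, and subadditivity via superlinearity just as you do. The paper in fact skips part \emph{ii)} entirely (``obtained easily by dualizing the proof of case \emph{i)}''), so your explicit treatment of that half is a welcome addition rather than a deviation.
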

\begin{proof}
\ 
\begin{enumerate}[\itshape i)]
\item Let's first assume that $\omega$ exists and let $\Gamma \in \Trans P$. Since
\[\omega_\Gamma \leq \epsilon \iff \Gamma \leq \Omega_\epsilon\]
for all $\epsilon \geq 0$, we see that if $\omega_\Gamma = \infty$, the set consists only of $\infty$, and if $\omega_\Gamma < \infty$, then $\Gamma \leq \Omega_{\omega_\Gamma}$ by setting $\epsilon = \omega_\Gamma$. Hence $\omega_\Gamma$ is in the set. If $\epsilon$ is in the set as well, then $\Gamma \leq \Omega_\epsilon$, and further $\omega_\Gamma \leq \epsilon$. Hence, $\omega_\Gamma$ is a lower bound, and consequently the minimum.

Now we'll assume that the minimums exist, and we'll show that the assignment
\[\omega_\Gamma = \min \big(\{\epsilon \in [0,\infty) \mid \Gamma \leq \Omega_\epsilon\} \cup \{ \infty \}\big)\]
defines a sublinear projection such that $\omega \dashv \Omega$. First, since $I \leq \Omega_0$, we get $\omega_I = 0$. Let $\Gamma, \K \in \Trans P$. If $\Gamma \leq \K$, then
\[\{\epsilon \in [0,\infty) \mid \K \leq \Omega_\epsilon\} \subseteq \{\epsilon \in [0,\infty) \mid \Gamma \leq \Omega_\epsilon\},\]
and hence $\omega_\Gamma \leq \omega_\K$.

If $\omega_\Gamma = \infty$ or $\omega_\K = \infty$, then clearly $\omega_{\Gamma\K} \leq \omega_\Gamma + \omega_\K$. If $\omega_\Gamma, \omega_\K < \infty$, then $\Gamma \leq \Omega_{\omega_\Gamma}$ and $\K \leq \Omega_{\omega_\K}$. Since $\Omega$ is a superlinear family,
\[\Omega_{\omega_\Gamma}\Omega_{\omega_\K} \leq \Omega_{\omega_\Gamma + \omega_\K},\]
and further $\Gamma\K \leq \Omega_{\omega_\Gamma + \omega_\K}$. Hence $\omega_{\Gamma\K} \leq \omega_\Gamma + \omega_\K$. This shows that $\omega$ is a sublinear projection.

Clearly
\[\{\epsilon \in [0,\infty) \mid \Gamma \leq \Omega_\epsilon\} \cup \{ \infty \} = [\omega_\Gamma,\infty],\]
and further
\[\omega_\Gamma \leq \epsilon \iff \epsilon \in [\omega_\Gamma,\infty] = \{\epsilon' \in [0,\infty) \mid \Gamma \leq \Omega_{\epsilon'}\} \iff \Gamma \leq \Omega_\epsilon\]
for all $\epsilon \geq 0$. Hence $\omega \dashv \Omega$.

\item The proof for this case is obtained easily by dualizing the proof of case \emph{i)} so we'll skip it.
\end{enumerate}
\end{proof}

\begin{prop}
Let $\Omega$ be a superlinear family and $\omega$ be a sublinear projection such that $\omega \dashv \Omega$.
\begin{enumerate}[\itshape i)]
\item Let $\Gamma, \Gamma', \K, \K' \in \Trans P$ such that $\Gamma \leq \Gamma'$ and $\K \leq \K'$. Then, for all \emph{decreasing} maps $f \colon \Dgm_{\mb P} \to \mb G$
\[\nabla_{\Gamma',\K'}f \leq \nabla_{\Gamma,\K}f.\]
\item For all \emph{decreasing} maps $f,g \colon \Dgm_{\mb P} \to \mb G$
\[\dist_E^{\Omega}(f,g) = \dist_E^{\omega}(f,g).\]
\end{enumerate}
\end{prop}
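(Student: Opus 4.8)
The plan is to get part i) as an immediate consequence of the order-reversing property of decreasing maps together with Lemma~\ref{lemma: -1 order}, and then to derive part ii) by comparing the two defining infima, using the adjunction relation $\omega \dashv \Omega$ to pass between conditions on $\omega_\Gamma$ and conditions on $\Gamma$, and using part i) to ``round up'' an arbitrary admissible pair of translations to the pair $(\Omega_\epsilon,\Omega_\epsilon)$.

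For part i), fix $(a,b)\in\Dgm_{\mb P}$. From $\Gamma\leq\Gamma'$, Lemma~\ref{lemma: -1 order} gives $\Gamma'^{-1}\leq\Gamma^{-1}$, hence $\Gamma'^{-1}a\leq\Gamma^{-1}a$; and $\K\leq\K'$ gives $\K b\leq\K' b$. Recalling that the preorder on $\Dgm_{\mb P}$ is $(x,y)\leq(x',y')\iff x\geq x'$ and $y\leq y'$, this is precisely the statement $(\Gamma^{-1}a,\K b)\leq(\Gamma'^{-1}a,\K' b)$ in $\Dgm_{\mb P}$. Since $f$ is decreasing, applying it reverses this inequality: $f(\Gamma'^{-1}a,\K' b)\leq f(\Gamma^{-1}a,\K b)$, i.e.\ $\nabla_{\Gamma',\K'}f(a,b)\leq\nabla_{\Gamma,\K}f(a,b)$. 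As $(a,b)$ was arbitrary, $\nabla_{\Gamma',\K'}f\leq\nabla_{\Gamma,\K}f$.

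For part ii), I would prove the two inequalities between $\dist_E^{\Omega}(f,g)$ and $\dist_E^{\omega}(f,g)$ separately. First, if $\epsilon$ satisfies $\nabla_{\Omega_\epsilon}f\leq g$ and $\nabla_{\Omega_\epsilon}g\leq f$, take $\Gamma=\K=\Omega_\epsilon$; applying $\omega\dashv\Omega$ to the trivial inequality $\Omega_\epsilon\leq\Omega_\epsilon$ gives $\omega_{\Omega_\epsilon}\leq\epsilon$, so $\epsilon$ lies in the set defining $\dist_E^{\omega}(f,g)$, whence $\dist_E^{\omega}(f,g)\leq\dist_E^{\Omega}(f,g)$. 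Conversely, suppose $\epsilon$ admits $\Gamma,\K$ with $\omega_\Gamma,\omega_\K\leq\epsilon$ and $\nabla_{\Gamma,\K}f\leq g$, $\nabla_{\K,\Gamma}g\leq f$. The adjunction relation turns $\omega_\Gamma\leq\epsilon$ and $\omega_\K\leq\epsilon$ into $\Gamma\leq\Omega_\epsilon$ and $\K\leq\Omega_\epsilon$. Since $f$ and $g$ are decreasing, part i) gives $\nabla_{\Omega_\epsilon}f=\nabla_{\Omega_\epsilon,\Omega_\epsilon}f\leq\nabla_{\Gamma,\K}f\leq g$ and likewise $\nabla_{\Omega_\epsilon}g\leq\nabla_{\K,\Gamma}g\leq f$, so $\epsilon$ lies in the set defining $\dist_E^{\Omega}(f,g)$; taking the infimum over all such $\epsilon$ yields $\dist_E^{\Omega}(f,g)\leq\dist_E^{\omega}(f,g)$, and equality follows.

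I do not expect a genuine obstacle: both parts are essentially bookkeeping with the preorders involved. The one point requiring a little care is the implicit well-definedness of the erosions, i.e.\ that $(\Gamma^{-1}a,\K b)\in\Dgm_{\mb P}$ whenever $(a,b)\in\Dgm_{\mb P}$; this is already presupposed wherever $\nabla_{\Gamma,\K}f$ is written, and in any case it follows from $\Gamma^{-1}a\leq a<b\leq\K b$. It is also worth flagging that the decreasing hypothesis on $f$ (and, in part ii), on $g$) is used in an essential way — it is exactly what permits enlarging the translation pair to $(\Omega_\epsilon,\Omega_\epsilon)$ — which is consistent with the fact that $\dist_E^{\Omega}$ is only claimed to be a pseudo-metric on decreasing maps in Proposition~\ref{thm: metric}.
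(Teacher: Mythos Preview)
Your proof is correct and follows essentially the same line as the paper's own proof: part~\textit{i)} is obtained pointwise from $\Gamma'^{-1}a\leq\Gamma^{-1}a$, $\K b\leq\K'b$ and the decreasing assumption, and part~\textit{ii)} is proved by the two inclusions of defining sets using $\omega_{\Omega_\epsilon}\leq\epsilon$ for one direction and $\Gamma,\K\leq\Omega_\epsilon$ together with part~\textit{i)} for the other. The only cosmetic difference is that you invoke Lemma~\ref{lemma: -1 order} explicitly, whereas the paper simply writes the inequality $\Gamma'^{-1}a\leq\Gamma^{-1}a$ without citation.
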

\begin{proof}
\ 
\begin{enumerate}[\itshape i)]
\item Let $(a,b) \in \Dgm_{\mb P}$. Now
\[\Gamma'^{-1}a \leq \Gamma^{-1}a \text{ and } \K b \leq \K'b,\]
so $(\Gamma^{-1}a, \K b) \leq (\Gamma'^{-1}a, \K'b)$. Since $f$ is decreasing, we get
\[\nabla_{\Gamma',\K'}f(a,b) = f(\Gamma'^{-1}a,\K'b) \leq f(\Gamma^{-1}a,\K b) = \nabla_{\Gamma,\K}f(a,b).\]
Hence $\nabla_{\Gamma',\K'}f \leq \nabla_{\Gamma,\K}f$.

\item Let $\epsilon \in [0,\infty)$. Let's first assume that $\nabla_{\Omega_\epsilon}f \leq g$ and $\nabla_{\Omega_\epsilon}g \leq f$. By setting $\Gamma = \Omega_\epsilon$ in the defining equivalence of the adjunction relation, we see that $\omega_{\Omega_\epsilon} \leq \epsilon$. Hence we can choose $\Gamma = \K = \Omega_\epsilon$ and now $\omega_\Gamma,\omega_\K \leq \epsilon$ and $\nabla_{\Gamma,\K}f \leq g$ and $\nabla_{\K,\Gamma}g \leq f$. This shows that
\[\dist_E^{\Omega}(f,g) \geq \dist_E^{\omega}(f,g).\]

Next, let's assume that there exists $\Gamma, \K \in \Trans P$ such that $\omega_\Gamma, \omega_\K \leq \epsilon$, $\nabla_{\Gamma, \K}f \leq g$ and $\nabla_{\K, \Gamma}g \leq f$. Since $\omega_\Gamma, \omega_\K \leq \epsilon$, the adjunction relation says that $\Gamma,\K \leq \Omega_\epsilon$. Hence, by part \emph{i)} of this theorem, $\nabla_{\Omega_\epsilon}f \leq \nabla_{\Gamma, \K}f$ and $\nabla_{\Omega_\epsilon}g \leq \nabla_{\K, \Gamma}g$, and further
\[\nabla_{\Omega_\epsilon}f \leq g \text{ and } \nabla_{\Omega_\epsilon}g \leq f.\]
This shows that
\[\dist_E^{\Omega}(f,g) \leq \dist_E^{\omega}(f,g).\]
\end{enumerate}
\end{proof}

\begin{cor}
For all persistence modules $F,G \colon \mb P \to \mb C$
\[\dist_E^{\Omega}(F,G) = \dist_E^{\omega}(F,G).\]
\end{cor}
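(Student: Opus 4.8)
The plan is to reduce everything to the preceding proposition, whose part \emph{ii)} already states the equality $\dist_E^{\Omega}(f,g) = \dist_E^{\omega}(f,g)$ for decreasing maps $f,g \colon \Dgm_{\mb P} \to \mb G$ whenever $\omega \dashv \Omega$. Since the erosion distance of persistence modules is defined (Definition \ref{def: persmod erosion dist}) simply as the erosion distance of the associated rank invariants, namely $\dist_E^{\Omega}(F,G) = \dist_E^{\Omega}(\mathcal F,\mathcal G)$ and $\dist_E^{\omega}(F,G) = \dist_E^{\omega}(\mathcal F,\mathcal G)$, the only thing one needs to verify before invoking the proposition is that $\mathcal F$ and $\mathcal G$ are decreasing maps $\Dgm_{\mb P} \to \mb C$.

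First I would recall that $\mb C$ is, throughout this section, an Abelian category equipped with a preorder respecting mono- and epimorphisms, and that $\omega$ and $\Omega$ are assumed to form an adjoint pair $\omega \dashv \Omega$ (this is the standing hypothesis of the section, inherited from the surrounding discussion). Then I would cite Proposition \ref{thm: decreasing}, which says precisely that for every functor $F \colon \mb P \to \mb C$ the rank invariant $\mathcal F$ is decreasing; in particular both $\mathcal F$ and $\mathcal G$ qualify as inputs to part \emph{ii)} of the preceding proposition, taking $\mb G = \mb C$ with its given preorder.

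Having checked this, the argument closes in one line: apply the preceding proposition to $f = \mathcal F$ and $g = \mathcal G$ to get $\dist_E^{\Omega}(\mathcal F,\mathcal G) = \dist_E^{\omega}(\mathcal F,\mathcal G)$, and then unwind the definitions to conclude $\dist_E^{\Omega}(F,G) = \dist_E^{\omega}(F,G)$. There is essentially no obstacle here — the corollary is a direct specialization of the proposition from arbitrary decreasing maps into a preordered class to the particular decreasing maps arising as rank invariants of functors into $\mb C$ — so the "hard part", to the extent there is one, was already dispatched in proving part \emph{ii)} of the proposition, which in turn rested on the characterization theorem relating $\omega$ and $\Omega$ and on the monotonicity of $\nabla_{\Gamma,\K}f$ in $(\Gamma,\K)$ for decreasing $f$.
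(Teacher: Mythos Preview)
Your proposal is correct and matches the paper's (implicit) argument exactly: the corollary is stated without proof because it is immediate from part \emph{ii)} of the preceding proposition once one recalls that rank invariants are decreasing (Proposition~\ref{thm: decreasing}) and that the erosion distance of persistence modules is by definition the erosion distance of their rank invariants.
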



\phantomsection
\addcontentsline{toc}{section}{\refname}

\bibliographystyle{halpha5}
\bibliography{mybib}{}

\begin{thebibliography}{CdSGO16}

\bibitem[BdSS15]{bubenik15}
Peter Bubenik, Vin de~Silva, and Jonathan Scott.
\newblock Metrics for generalized persistence modules.
\newblock {\em Foundations of Computational Mathematics}, 15(6):1501--1531,
  2015.

\bibitem[Car09]{carlsson09}
Gunnar Carlsson.
\newblock Topology and data.
\newblock {\em Bull. Amer. Math. Soc.}, 46:255--308, 2009.

\bibitem[CdSGO16]{chazal16}
Fr\'{e}d\'{e}ric Chazal, Vin de~Silva, Marc Glisse, and Steve Oudot.
\newblock {\em The Structure and Stability of Persistence Modules}.
\newblock SpringerBriefs in Mathematics. Springer International Publishing,
  2016.
\newblock ISBN 978-3-319-42545-0.

\bibitem[CSEH07]{cohen-steiner07}
David Cohen-Steiner, Herbert Edelsbrunner, and John Harer.
\newblock Stability of persistence diagrams.
\newblock {\em Discrete {\&} Computational Geometry}, 37(1):103--120, 2007.

\bibitem[CZ09]{carlsson09b}
Gunnar Carlsson and Afra Zomorodian.
\newblock The theory of multidimensional persistence.
\newblock {\em Discrete {\&} Computational Geometry}, 42(1):71--93, 2009.

\bibitem[DF04]{donatini04}
Pietro Donatini and Patrizio Frosini.
\newblock Natural pseudodistances between closed manifolds.
\newblock {\em Forum Mathematicum}, 16(5):695--715, 2004.

\bibitem[DF07]{donatini07}
Pietro Donatini and Patrizio Frosini.
\newblock Natural pseudodistances between closed surfaces.
\newblock {\em Journal of the European Mathematical Society}, 9(2):231--253,
  2007.

\bibitem[DF09]{donatini09}
Pietro Donatini and Patrizio Frosini.
\newblock Natural pseudo-distances between closed curves.
\newblock {\em Forum Mathematicum}, 21(6):981--999, 2009.

\bibitem[dSMS17]{desilva17}
Vin de~Silva, Elizabeth Munch, and Anastasios Stefanou.
\newblock Theory of interleavings on $[0,\infty)$-actegories.
\newblock 2017, \textit{Preprint:} {arXiv:1706.04095 [math.CT]}.

\bibitem[FJ16]{frosini16}
Patrizio Frosini and Grzegorz Jab{\l}o{\'{n}}ski.
\newblock Combining persistent homology and invariance groups for shape
  comparison.
\newblock {\em Discrete {\&} Computational Geometry}, 55(2):373--409, 2016.

\bibitem[Fro13]{frosini13}
Patrizio Frosini.
\newblock Stable comparison of multidimensional persistent homology groups with
  torsion.
\newblock {\em Acta Applicandae Mathematicae}, 124(1):43--54, 2013.

\bibitem[Ghr08]{ghrist08}
Robert Ghrist.
\newblock Barcodes: The persistent topology of data.
\newblock {\em Bull. Amer. Math. Soc.}, 45:61--75, 2008.

\bibitem[Les15]{lesnick15}
Michael Lesnick.
\newblock The theory of the interleaving distance on multidimensional
  persistence modules.
\newblock {\em Foundations of Computational Mathematics}, 15(3):613--650, 2015.

\bibitem[Pat16]{patel16}
Amit Patel.
\newblock Generalized persistence diagrams.
\newblock 2016, \textit{Preprint:} {arXiv:1601.03107 [math.AT]}.

\bibitem[Puu16]{puuska16}
Ville Puuska.
\newblock Yleiset persistenssimodulit ja lomitusmetriikat.
\newblock Master's thesis, University of Tampere, 2016.

\bibitem[ZC05]{zomorodian05b}
Afra Zomorodian and Gunnar Carlsson.
\newblock Computing persistent homology.
\newblock {\em Discrete {\&} Computational Geometry}, 33(2):249--274, 2005.

\end{thebibliography}

\end{document}